\documentclass[12pt,letterpaper]{amsart}

\usepackage{ulem}
\usepackage{enumerate}
\usepackage{amssymb,amsthm,amsmath}
\usepackage{mathrsfs}
\usepackage[colorlinks,breaklinks]{hyperref}
\usepackage[margin=1in]{geometry}
\usepackage{braket}
\usepackage{comment}
\usepackage{tikz}
\usetikzlibrary{trees}
\usepackage[edges]{forest}
\usepackage{pgfplots}
\usepackage{graphicx}
\usepackage{caption}
\pgfplotsset{compat=1.11}
\usepgfplotslibrary{fillbetween}
\usetikzlibrary{intersections}

\usetikzlibrary{positioning,arrows}
\usepackage{capt-of}
\usepackage{caption}
\usepackage{float}
\usetikzlibrary{shapes}
\usetikzlibrary{plotmarks}
\tikzset{
  state/.style={circle,draw,minimum size=6ex},
  arrow/.style={-latex, shorten >=1ex, shorten <=1ex}}

\theoremstyle{plain}
\newtheorem{prop}{Proposition}[section]
\newtheorem{lemma}[prop]{Lemma}
\newtheorem{theorem}[prop]{Theorem}
\newtheorem{cor}[prop]{Corollary}

\theoremstyle{definition}
\newtheorem{defi}[prop]{Definition}

\theoremstyle{remark}
\newtheorem{remark}[prop]{Remark}

\setlength{\parindent}{20pt}

\newcommand{\Aa}{\textbf{a}}
\newcommand{\Bb}{\textbf{b}}
\newcommand{\Cc}{\textbf{c}}

\newcommand{\C}{\mathbb{C}}
\newcommand{\R}{\mathbb{R}}
\newcommand{\N}{\mathbb{N}}
\newcommand{\HH}{\mathbb{H}}

\newcommand{\CC}{\mathbb{C}}

\newcommand{\pmat}[4]{\begin{pmatrix} #1&#2\\#3&#4\end{pmatrix}}

\DeclareMathOperator{\dvol}{dvol}
\DeclareMathOperator{\vol}{vol}


\allowdisplaybreaks

\makeatletter
\@namedef{subjclassname@2020}{%
  \textup{2020} Mathematics Subject Classification}
\makeatother

\usepackage{comment}
   
\begin{document}

\title[]{Improved fractal Weyl bounds for convex cocompact hyperbolic surfaces and large resonance-free regions}
\author[L.\@ Soares]{Louis Soares}
\email{louis.soares@gmx.ch}
\subjclass[2020]{Primary: 58J50, 81U24, Secondary: 11M36, 37C30}

\keywords{Resonances, hyperbolic surfaces, transfer operators}

\begin{abstract}
Let $X$ be a convex cocompact hyperbolic surface, and let $\delta$ denote the Hausdorff dimension of its limit set. Let $N_X(\sigma,T)$ denote the number of resonances of $X$ inside the box $[\sigma, \delta] + i[0,T]$. We prove that for all $\sigma > \delta/2$, we have
\[
N_X(\sigma,T) \ll_\epsilon T^{1 + \delta - 2(2\sigma - \delta) + \epsilon}.
\]
This strengthens the previously established ``improved'' fractal Weyl bounds due to Naud~\cite{Naud14} and Dyatlov~\cite{Dya19}. Moreover, this result implies that for every $\epsilon > 0$, there exist resonance-free rectangular boxes of arbitrary height within the strip
\[
\left\{\, s \in \CC : \tfrac{3}{4}\delta + \epsilon < \mathrm{Re}(s) < \delta\, \right\}.
\]
Our proof combines Naud's approach~\cite{Naud14} with the refined transfer operator machinery developed by Dyatlov--Zworski~\cite{DyZw18}, as well as a new estimate for oscillatory integrals that arise naturally in our analysis.
\end{abstract}
\maketitle

\section{Introduction and Statement of Results}
In mathematical physics, resonances serve as the primary spectral data in settings where eigenvalues are absent, which is often the case when the underlying geometry is not finite. Over the past decades, substantial research has been devoted to the study of resonances; see~\cite{Zworski2017} for a broad introduction to this subject.

In this paper, we focus on infinite-area, convex cocompact hyperbolic surfaces, that is, surfaces of constant curvature $-1$ that can be decomposed into a compact surface $N$ with geodesic boundary and a finite number of funnel ends glued to $N$. 
Equivalently, $X$ is a geometrically finite, infinite-area hyperbolic surface without cusps. Fix such a surface $X$ for the remainder of the introduction.

The surface $X$ is isometric to the quotient $\Gamma \backslash \mathbb{H}^2$, where $\mathbb{H}^2$ denotes the hyperbolic plane and $\Gamma \subset \mathrm{PSL}_2(\mathbb{R})$ is a free Fuchsian group. The limit set $\Lambda$ of $X$, i.e., the accumulation set of $\Gamma$-orbits, is a Cantor-like fractal subset of $\partial \HH^2 \cong \R\cup \{ \infty \}$ whose Hausdorff dimension we denote by $\delta\in (0,1).$

Let $\Delta_X$ be the positive Laplacian on $X.$ The resolvent operator
\[
R_X(s) := \left( \Delta_X - s(1 - s) \right)^{-1} \colon L^2(X) \to L^2(X),
\]
is holomorphic for \( \mathrm{Re}(s) > 1 \). By work of Mazzeo--Melrose~\cite{MM87}, it admits a meromorphic continuation to a family of bounded operators
\begin{equation} \label{resolvent_continued}
R_X(s) \colon C_c^\infty(X) \to C^\infty(X),
\end{equation}
defined on all of \( \CC \), with poles of finite rank. For hyperbolic surfaces, a simplified proof was given by Guillop\'e--Zworski~\cite{GZ95}. The poles of \( R_X(s) \) are called the \textit{resonances} of \( X \), and their multiplicity is defined as the rank of the corresponding residual operator. We denote by \( \mathcal{R}_X \) the multiset of resonances, counted with multiplicities. All resonances lie in the half-plane \( \mathrm{Re}(s) \le \delta \), and there are no resonances on the line \( \mathrm{Re}(s) = \delta \) except for a simple resonance at \( s = \delta \). We refer the reader to Borthwick's book~\cite{Borthwick_book} for an introduction to the spectral theory of infinite-area hyperbolic surfaces, a subject that has not yet been fully explored.

We are interested in the following resonance counting functions:
\begin{align}
M_X(\sigma,T) &:= \#\left\{ s \in \mathcal{R}_X : \mathrm{Re}(s) \ge \sigma,\, \vert \mathrm{Im}(s) - 1 \vert \le T \right\}, \label{defi:MX} \\
N_X(\sigma,T,H) &:= \#\left\{ s \in \mathcal{R}_X : \mathrm{Re}(s) \ge \sigma,\, \mathrm{Im}(s) \in [T - H, T + H] \right\}, \label{defi:NXTH} \\
N_X(\sigma,T) &:= N_X\left(\sigma, \frac{T}{2}, \frac{T}{2} \right) = \#\left\{ s \in \mathcal{R}_X : \mathrm{Re}(s) \ge \sigma,\, 0 \le \mathrm{Im}(s) \le T \right\}. \label{defi:NX}
\end{align}
Throughout, resonances are counted with multiplicities. From Guillopé--Lin--Zworski~\cite{GLZ}, one has the upper \textit{fractal Weyl bound} for any $\sigma < \delta$:
\begin{equation}\label{intro:GLZ}
M_X(\sigma,T) \ll T^\delta,
\end{equation}
which immediately implies
\[
N_X(\sigma,T,H) \ll H T^\delta, \quad N_X(\sigma,T) \ll T^{1 + \delta}, \quad \text{for } 1 \le H \le T.
\]
The first result of this kind was established by Sj\"ostrand in his pioneering work on semiclassical Schr\"odinger operators~\cite{Sjoestrand}. Sj\"ostrand's work inspired the \textit{fractal Weyl conjecture}, first formalized by Lu--Sridhar--Zworski~\cite{Lu_Sridhar_Zworski}. It says that the number of resonances near the continuous spectrum grows like a power law with exponent equal to half the dimension of the classical trapped set. In our setting, the fractal Weyl conjecture implies that for all $\sigma \in \mathbb{R}$ sufficiently negative, we should have
\begin{equation}\label{FWL}
N_X(\sigma,T) \asymp T^{1+\delta}.
\end{equation}
For numerical evidence in support of it we refer to~\cite[Chapter~16]{Borthwick_book} and the references therein.

We emphasize that the asymptotic behavior predicted by the fractal Weyl law in \eqref{FWL} is expected to hold only for sufficiently negative values of \(\sigma\). When \(\sigma\) is close to \(\delta\), this asymptotic behavior no longer applies. In fact, Naud~\cite{Naud14} proved the existence of a function \(\tau(\sigma)\), which is positive for \(\sigma \in (\delta/2, \delta)\), such that
\begin{equation}\label{intro:Naud}
M_X(\sigma,T) \ll T^{\delta - \tau(\sigma)} \quad \text{and} \quad N_X(\sigma,T) \ll T^{1 + \delta - \tau(\sigma)}.
\end{equation}

This result supports a conjecture made by Jakobson and Naud in~\cite{JN}. It posits that the ``essential spectral gap'' of a hyperbolic surface \(X\) is equal to \(\delta/2\), which is equivalent to \(N_X(\sigma,T) = O(1)\) for all \(\sigma > \delta/2\). This seems out of reach given the current state of the art. The heuristic justification for this conjecture is a square-root type cancellation estimate for the spectral radius of the transfer operator associated with the Schottky representation of \(X\).

Dyatlov~\cite{Dya19} made Naud’s bounds explicit by showing that
\begin{equation}\label{intro:dyatlov_m}
M_X(\sigma,T) \ll_\epsilon T^{2(\delta - \sigma) + \epsilon} = T^{\delta - (2\sigma - \delta) + \epsilon},
\end{equation}
which yields
\begin{equation}\label{intro:dyatlov}
N_X(\sigma,T,H) \ll_\epsilon H T^{\delta - (2\sigma - \delta) + \epsilon}, \quad 
N_X(\sigma,T) \ll_\epsilon T^{1 + \delta - (2\sigma - \delta) + \epsilon}.
\end{equation}
The aim of this paper is to refine the estimates in~\eqref{intro:dyatlov} by proving

\begin{theorem}[Main Theorem]\label{main_thm}
Let $X$ be a non-elementary, infinite-area, convex cocompact hyperbolic surface, and let $\delta$ denote the Hausdorff dimension of its limit set. Then for every $\epsilon > 0$ and $\eta > 0$, there exists a constant $C = C(\epsilon, \eta, X) > 0$ such that for all $\sigma > \delta/2$ and all $T^\eta \le H \le T$, we have
\[
N_X(\sigma, T, H) \le C  H^{1-(2\sigma-\delta)+\epsilon} T^{2\delta - 2\sigma}.
\]
In particular, for every $\epsilon > 0$, there exists a constant $C = C(\epsilon, X) > 0$ such that for all $\sigma > \delta/2$ we have
\[
N_X(\sigma, T) \le C  T^{1 + \delta - 2(2\sigma - \delta) + \epsilon}.
\]
\end{theorem}

A few remarks are in order:

\begin{itemize}
\item These estimates improve upon \eqref{intro:dyatlov} in the range $T^{\eta} \le H \le T$. However, we are not able to improve Dyatlov's bound for $M_X(\sigma,T)$, due to technical reasons discussed in Section~\ref{sec:outline}.

\item The constant $C(\epsilon, X)$ in Theorem~\ref{main_thm} depends intricately on the Schottky data of $X$. We made no attempt to calculate it explicitly.

\item Theorem~\ref{main_thm} holds trivially for elementary surfaces. A hyperbolic surface $X$ is said to be \textit{elementary} if its limit set is finite, or equivalently, if $X$ is a hyperbolic cylinder. In such cases, we have $\delta = 0$ and $\mathcal{R}_X$ is the half-lattice $(2\pi i/\ell)\mathbb{Z} - \mathbb{N}_0$, where $\ell > 0$ is the length of the shortest closed geodesic on $X$; see~\cite[Proposition~5.1]{Borthwick_book}.
\end{itemize}

Theorem~\ref{main_thm} implies the existence of resonance-free regions of arbitrarily large height inside the vertical strip
\[
\left\{ s \in \CC : \tfrac{3}{4}\delta + \epsilon < \mathrm{Re}(s) < \delta \right\}.
\]
More precisely, we have

\begin{cor}
Let $X$ be as in Theorem~\ref{main_thm}, and fix $0 < \lambda < 1$. For every $\epsilon > 0$, there exists a density-one subset $\mathcal{N} \subseteq \mathbb{N}_0$ such that for all $N \in \mathcal{N}$,
\[
\mathcal{R}_X \cap \left( \left[\tfrac{3}{4} \delta + \tfrac{\lambda}{2} + \epsilon, \delta\right] + i[N, N + N^\lambda] \right) = \varnothing.
\]
Moreover, for any slowly varying and increasing function $f \colon \mathbb{R}^+ \to \mathbb{R}^+$ and any $\epsilon > 0$, there exists a density one subset $\mathcal{N} \subseteq \mathbb{N}_0$ such that for all $N \in \mathcal{N}$,
\[
\mathcal{R}_X \cap \left( \left[\tfrac{3}{4} \delta + \epsilon, \delta\right] + i[N, N + f(N)] \right) = \varnothing.
\]
\end{cor}

\begin{proof}
Consider the first statement. For $N \in \mathbb{N}$ and $\delta/2<\sigma < \delta$, define
\[
S_{N,\sigma} := \mathcal{R}_X \cap \left( [\sigma, \delta] + i[N, N + N^\lambda] \right).
\]
Let $x>1$, and let $\eta=\eta(x) \in [0,1]$ be the fraction of those $N \in [0, x)$ for which $S_{N,\sigma}$ is non-empty. Since any point can lie in at most $x^\lambda$ such sets, we obtain
\[
\left\vert \bigcup_{0 \le N < x} S_{N,\sigma} \right\vert \ge \frac{1}{x^\lambda} \sum_{0 \le N < x} \vert S_{N,\sigma} \vert \ge \eta x^{1 - \lambda}.
\]
On the other hand, Theorem~\ref{main_thm} implies that for any $\epsilon > 0$,
\[
\left\vert \bigcup_{0 \le N < x} S_{N,\sigma} \right\vert \le N_X(\sigma, 2x) \ll x^{1 + \delta - 2(2\sigma - \delta) + \epsilon}.
\]
Combining both bounds yields
\begin{equation}\label{eq:eta_bound}
\eta \ll x^{\delta - 2(2\sigma - \delta) + \lambda + \epsilon}.
\end{equation}
Taking
\[
\sigma = \frac{3}{4} \delta  + \frac{\lambda}{2} + \epsilon,
\]
the exponent on the right side of \eqref{eq:eta_bound} becomes negative, so $\eta \to 0$ as $x \to \infty$, proving the claim.

The same argument applies when replacing $x^\lambda$ by a slowly varying increasing function $f(x)$, noting that $f(x) = O_\epsilon(x^\epsilon)$ for all $\epsilon > 0$.
\end{proof}

\subsection{Outline of the proof}\label{sec:outline} 
We outline the proof of Theorem \ref{main_thm}. Fix a non-elementary, infinite-area, convex cocompact hyperbolic surface $X$ with limit set $\Lambda$. Resonances for $X$ correspond to the zeros of the Fredholm determinant
$$
f(s) = \det(1 - A(s)),
$$
where $A(s)$ is a holomorphic trace class operator acting on $H^2(\Omega)$, the Hilbert space of holomorphic $L^2$-functions on a neighbourhood $\Omega \subset \CC$ of $\Lambda$. After fixing a Schottky group $\Gamma$ such that $X \cong \Gamma \backslash \HH^2$ (see Section \ref{sec:prelim}), various options for $A(s)$ are available, including:
\begin{itemize}
\item the standard transfer operator $\mathcal{L}_s$ (yielding the Selberg zeta function $f(s)$),
\item its iterates $\mathcal{L}_s^n$ with suitable $n\in \mathbb{N}$, see Naud \cite{Naud14},
\item and the $\tau$-refined operator $\mathcal{L}_{\tau,s}$ introduced by Dyatlov–Zworski \cite{DyZw18}.
\end{itemize}
In this work, we take
$$
A(s) := \mathcal{L}_{\tau_0,\tau_1,s}^2, \quad \text{where } \mathcal{L}_{\tau_0,\tau_1,s} := \mathcal{L}_{\tau_0,s} \circ \mathcal{L}_{\tau_1,s},
$$
for suitable resolution parameters $\tau_0, \tau_1 > 0$, chosen depending on $T$.

Following Guillopé--Lin--Zworski~\cite{GLZ}, we introduce an auxiliary parameter $h > 0$ and let the transfer operators act on the refined functional space $H^2(\Omega(h))$, where $\Omega(h)$ is a union of $O(h^{-\delta})$ small Euclidean disks centered on $\mathbb{R}$ with radius $O(h)$. For sufficiently small parameters $h, \tau_0, \tau_1$, the operators
\[
\mathcal{L}_{\tau_0,\tau_1,s} \colon H^2(\Omega(h)) \to H^2(\Omega(h))
\]
are trace class, and the function
\[
f(s) := \det(1 - \mathcal{L}_{\tau_0,\tau_1,s}^2)
\]
is a holomorphic multiple of the Selberg zeta function.

Thus, the resonance counting function $N_X(\sigma, T, H)$ can be estimated by bounding the number of zeros of $f(s)$ in the rectangular box $[\sigma, \delta] + i[T-H, T+H]$. By using
\[
\log |f(s)| \le \| \mathcal{L}_{\tau_0,\tau_1,s} \|_{\mathrm{HS}, h}^2,
\]
where $\| \cdot \|_{\mathrm{HS}, h}$ denotes the Hilbert--Schmidt norm on $H^2(\Omega(h))$, and applying a variant of Jensen’s formula, this reduces to estimating the integral
\[
\frac{1}{H}\int_{T-H}^{T+H} \| \mathcal{L}_{\tau_0,\tau_1,\sigma + it} \|_{\mathrm{HS}, h}^2 \, dt.
\]
For technical reasons, it is preferable to work with a smoothed version of this integral:
\[
\int_{-\infty}^{+\infty} \varphi_{T,H}(t) \| \mathcal{L}_{\tau_0,\tau_1,\sigma + it} \|_{\mathrm{HS}, h}^2 \, dt,
\]
where $\varphi_{T,H}$ is a non-negative, smooth function supported on $[T-2H, T+2H]$ and equal to $1/H$ on $[T-H, T+H]$, see Figure \ref{fig:bump-function}.

\begin{figure}[htbp]
  \centering
  \begin{tikzpicture}
    \begin{axis}[
      xlabel={$t$},
      xscale=1.5,
      yscale=0.3,
      xmin=0, xmax=8,
      ymin=0, ymax=0.3,
      axis x line=middle,
      axis y line=middle,
      samples=200,
      domain=0:10,
      xtick={1.4, 2.7, 5.3, 6.6},
      xticklabels={$T-2H$, $T-H$, $T+H$, $T+2H$},
	  ytick={0.2},
      yticklabels={$1/H$},
      tick label style={font=\tiny},
      xlabel style={font=\small},
      ylabel style={font=\small},
      smooth,
      thick,
    ]
      \addplot[black] {1/5 * (0.5 * (tanh(6*(x - 2)) - tanh(6*(x - 6))))};
      \addplot[dotted, thick] coordinates {(2.7,0) (2.7,0.2)};
      \addplot[dotted, thick] coordinates {(5.3,0) (5.3,0.2)};
      \addplot[dotted, thick] coordinates {(0,0.2) (2.7,0.2)};
    \end{axis}
  \end{tikzpicture}
  \caption{The bump function $\varphi_{T,H}$}
  \label{fig:bump-function}
\end{figure}

Since Schottky groups are free on generators indexed by $\mathcal{A} = \{1, \dots, 2m\}$, elements of $\Gamma$ correspond to reduced words $\Aa$. Proposition \ref{prop:HSnorm} gives an explicit expression for $\Vert \mathcal{L}_{\tau_0,\tau_1,s} \Vert_{\mathrm{HS}, h}^2$ as a sum over pairs of words $(\Aa, \Bb)$ in $\mathcal{A}$. By a separation lemma, Lemma \ref{lem:separation}, choosing $\tau_0 \approx h$ reduces this sum to pairs of the form $(\Cc \Aa_1, \Cc \Bb_1)$ with a large common prefix $\Cc$.

By averaging the remaining terms individually over $\vert \mathrm{Im}(s)\vert = t\in [T-H,T+H]$, we can exhibit further cancellations, provided $H\gg T^{\eta}$ for some fixed $\eta >0$. Specifically, we are led to consider integrals of the form
\begin{equation}\label{intro:osc_aver}
\int_{\Omega(h)} \widehat{\varphi_{T,H}}(\Phi_{\Aa,\Bb}(z)) \, g_{\Aa,\Bb}(z)\, \dvol(z),
\end{equation}
where $\Phi_{\Aa,\Bb}(z)$ is a phase function and $g_{\Aa,\Bb}(z)$ is an amplitude depending on $\Aa$ and $\Bb$. These integrals may be viewed as smoothed averaged oscillatory integrals, since
$$
\int_{\Omega(h)} \widehat{\varphi_{T,H}}(\Phi_{\Aa,\Bb}(z)) \, g_{\Aa,\Bb}(z)\, \dvol(z) = \int_{-\infty}^{+\infty} \varphi_{T,H}(t) \left( \int_{\Omega(h)} e^{it  \Phi_{\Aa,\Bb}(z) } \, g_{\Aa,\Bb}(z)\,\dvol(z)\right)\,  dt.
$$
For $\Aa\neq \Bb$ we prove an estimate for \eqref{intro:osc_aver} that substantially improves upon the one trivially obtained by the triangle inequality. \textit{This is the main novelty of this paper}, and is stated rigorously in Proposition \ref{prop:FOURIER}. To establish this bound, it is crucial to control the derivatives of the phase $\Phi_{\Aa,\Bb}(z)$, which is dealt with in Proposition \ref{prop:phaseDerivative}.

It is worth noting that, in principle, the same approach could be used to estimate \( M_X(\sigma, T) \). As before, \( M_X(\sigma, T) \) is roughly bounded by \( \Vert \mathcal{L}_{\tau_0,\tau_1,s} \Vert_{\mathrm{HS},h}^2 \), which would lead us to consider oscillatory integrals of the form
\begin{equation}\label{intro:osc}
\int_{\Omega(h)} e^{it  \Phi_{\Aa,\Bb}(z) }g(z) \, \mathrm{dvol}(z).
\end{equation}
One might attempt to apply van der Corput's lemma or some variant of it, hoping to capture some form of non-trivial cancellation. However, as $h\searrow 0$, the set \(\Omega_b(h)\) consists of more and more connected components, each having diameter of size at most $O(h)$. These regions seem too small for the oscillatory behaviour to generate substantial cancellation. Fortunately, this obstruction does not arise in the averaged version \eqref{intro:osc_aver}, where cancellation occurs through integration over $t$. This renders the fine geometric structure of $\Omega_b(h)$ largely irrelevant.

Finally, we note that the cancellation obtained in \eqref{intro:osc_aver} should reflect the existence of large resonance-free regions in the strip $\{  \delta/2 < \mathrm{Re}(s)\le \delta\}$. Numerical evidence for such regions can be found in the appendix of \cite{Dya19}.

\subsection{Structure of the Paper}
Section~\ref{sec:prelim} introduces the key ingredients for our proof. Most content is well-established or adapted from prior work:
\begin{itemize}
  	\item \ref{sec:hypGeom}: hyperbolic geometry preliminaries,
 	\item \ref{sec:SelbergZeta}: Selberg zeta function,
    \item \ref{sec:SchottkyGroups}: construction of Schottky groups,
    \item \ref{sec:comNotation}: combinatorial notation for indexing words in Schottky groups,
    \item \ref{sec:classicalTransfOper}: definition of the standard transfer operator and the link to the Selberg zeta function,
    \item \ref{sec:partRefOp}: partitions and refined transfer operators,
    \item \ref{sec:boundsForDeriv}: estimates for derivatives of elements of Schottky groups,
    \item \ref{sec:refinedFuncti}: refined functional spaces for the transfer operator,
    \item \ref{sec:boundsForOmega}: estimates concerning the refined set $\Omega(h)$,
    \item \ref{sec:Bergman}: properties of the Bergman kernel.
\end{itemize}

Section~\ref{sec:proofMainThm} contains the proof of Theorem \ref{main_thm}, which is divided into several subsections:
\begin{itemize}
    \item \ref{sec:separation}: separation lemma,
    \item \ref{sec:phaseder}: bounds for the derivative of the phase function,
    \item \ref{sec:averoscint}: key estimate for averaged oscillatory integrals,
    \item \ref{sec:jensen}: adaptation of Jensen's formula for resonance counting,
    \item \ref{sec:HSnorm}: explicit formula for the Hilbert--Schmidt norm,
    \item \ref{sec:certainSum}: estimate for special sums arising in the final proof steps,
    \item \ref{sec:finish}: putting everything together.
\end{itemize}

\subsection{Notation}
We use $f(x) \ll g(x)$ or $f(x) = O(g(x))$ to indicate that $|f(x)| \le C|g(x)|$ for some implied constant $C > 0$. If the constant depends on a parameter $y$, we write $f(x) \ll_y g(x)$ or $f(x) = O_y(g(x))$, and denote this explicitly as $C = C(y)$. The surface $X$ is fixed throughout, and all implied constants may depend on the Schottky data of $X$, even if not explicitly stated. We write $s = \sigma + it \in \CC$ to mean that $\sigma$ and $t$ are the real and imaginary parts of $s$. For any finite set $S$, its cardinality is denoted by $|S|$ or $\#S$.

\subsection{Acknowledgements}
I would like to thank Fr\'{e}d\'{e}ric Naud for his valuable comments, suggestions, and corrections on an earlier version of this paper.

\section{Preliminaries}\label{sec:prelim}

\subsection{Hyperbolic geometry}\label{sec:hypGeom} We start by reviewing some basic facts about hyperbolic geometry, referring the reader to Borthwick's book \cite{Borthwick_book} for a comprehensive discussion. One of the standard models for the hyperbolic plane is the Poincar\'{e} half-plane
$$
\HH^2=\{ x+iy\in \C\ :\ y>0\} 
$$
endowed with its standard metric of constant curvature $-1$,
$$
ds^2=\frac{dx^2+dy^2}{y^2}.
$$ 
The group of orientation-preserving isometries of $(\HH^2, ds)$ is isomorphic to $\mathrm{PSL}_2(\R)$, which acts on the extended complex plane $\overline{\C} = \C \cup \{ \infty\}$ (and hence also on $\HH^2$) by M\"{o}bius transformations
$$
\gamma=\pmat{a}{b}{c}{d}\in \mathrm{PSL}_2(\R),\quad z\in\overline{\C} \Longrightarrow  \gamma(z) = \frac{az+b}{cz+d}.
$$
A non-trivial element \( \gamma \in \mathrm{PSL}_2(\mathbb{R}) \) is classified as:
\begin{itemize}
    \item \textit{hyperbolic}, if \( |\mathrm{tr}(\gamma)| > 2 \); it then has two distinct fixed points on the boundary \( \partial \HH^2 \),
    \item \textit{parabolic}, if \( |\mathrm{tr}(\gamma)| = 2 \); it then has exactly one fixed point on \( \partial \HH^2 \),
    \item \textit{elliptic}, if \( |\mathrm{tr}(\gamma)| < 2 \); it then has a unique fixed point in the interior of \( \HH^2 \).
\end{itemize}

A classical result of Hopf (see for instance \cite[Theorem 2.8]{Borthwick_book}) states that every  hyperbolic surface $X$ is a quotient of the hyperbolic plane $\HH^2$ by a \textit{Fuchsian group}, i.e., a discrete subgroup $\Gamma$ of $\mathrm{PSL}_2(\R)$. A Fuchsian group is \textit{torsion-free} if it contains no elliptic elements. It is called \textit{convex cocompact} if it is finitely generated and if it contains neither elliptic nor parabolic elements. Since we only work with torsion-free Fuchsian groups in this paper, it makes no difference whether we work with $\mathrm{PSL}_2(\R)$ or with $\mathrm{SL}_2(\R)$, so we will henceforth stick to $\mathrm{SL}_2(\R)$.

Convex cocompactness has an equivalent geometric characterization, which can be described as follows. The \textit{Nielsen region} of \( X \) is the convex hull \( \widetilde{N} \subset \HH^2 \) of the limit set \( \Lambda \), i.e., the union of all geodesic arcs connecting points in \( \Lambda \). The \textit{convex core} \( N \) is the quotient \( \Gamma \backslash \widetilde{N} \). It is the smallest closed, non-empty convex subset of \( X \). A surface \( X \) is said to be \textit{convex cocompact} if its convex core is compact. Infinite-area convex cocompact hyperbolic surfaces are isometric to quotients $\Gamma \backslash \HH^2$, where $\Gamma$ is a Schottky group, as described in Section~\ref{sec:SchottkyGroups}.

\subsection{Selberg zeta function}\label{sec:SelbergZeta}
Let $\Gamma < \mathrm{PSL}_2(\mathbb{R})$ be a finitely generated Fuchsian group. The set of prime periodic geodesics on $X = \Gamma \backslash \HH^2$ is in bijection with the set $[\Gamma]_{\mathsf{prim}}$ of $\Gamma$-conjugacy classes of primitive hyperbolic elements. For each $[\gamma] \in [\Gamma]_{\mathsf{prim}}$, let $\ell(\gamma)$ denote the corresponding geodesic length.

The \textit{Selberg zeta function} is defined for $\mathrm{Re}(s) > \delta$ as
\begin{equation}\label{selbergZetaDefi}
Z_\Gamma(s) := \prod_{k=0}^\infty \prod_{[\gamma] \in [\Gamma]_{\mathsf{prim}}} \left( 1 - e^{-(s+k)\ell(\gamma)} \right),
\end{equation}
and extends meromorphically to all $s \in \CC$. By Patterson–Perry \cite{Patt_Perry}, the zeros of $Z_\Gamma(s)$ consist of:
\begin{itemize}
  \item topological zeros at $s = -k$ for $k \in \mathbb{N}_0$;
  \item the resonances of $X$, counted with multiplicity.
\end{itemize}
Thus, analyzing resonances is equivalent to studying the zero distribution of the Selberg zeta function.

Furthermore, we have $\overline{Z_\Gamma(s)} = Z_\Gamma(\overline{s})$ for all $s \in \CC$ by uniqueness of analytic continuation. This implies that resonances occur in conjugate pairs: if $s_0$ is a resonance, so is $\overline{s_0}$.

\subsection{Schottky groups}\label{sec:SchottkyGroups}
By a result of Button \cite{Button}, every infinite-area convex cocompact hyperbolic surface $X$ can be realized as a quotient $\Gamma \backslash \HH^2$, where $\Gamma$ is a Schottky group; see also \cite[Theorem~15.3]{Borthwick_book}. We briefly recall the construction:

\begin{itemize}
  \item Define the alphabet $\mathcal{A} = \{1, \dots, 2m\}$ and, for each $a \in \mathcal{A}$, set
  \[
  \overline{a} = 
\begin{cases} 
  a + m &\text{if } a \in \{1, \dots, m\}, \\
  a - m &\text{if } a \in \{m+1, \dots, 2m\}.
\end{cases}
  \]

  \item Choose open disks $D_1, \dots, D_{2m} \subset \CC$ centered on the real axis and in no particular order, with pairwise disjoint closures.

  \item Fix isometries $\gamma_1, \dots, \gamma_{2m} \in \mathrm{SL}_2(\mathbb{R})$ such that for all $a \in \mathcal{A}$,
  \[
  \gamma_a\left( \overline{\CC} \setminus D_{\overline{a}} \right) = D_a, \qquad \gamma_{\overline{a}} = \gamma_a^{-1}.
  \]
  (In the notation of \cite[Chapter~15]{Borthwick_book}, this corresponds to $m = r$ and $\gamma_a = S_a^{-1}$.)

  \item Let $\Gamma$ be the subgroup of $\mathrm{SL}_2(\mathbb{R})$ generated by $\gamma_1, \dots, \gamma_{2m}$. Then $\Gamma$ is a free group on $m$ generators; see \cite[Lemma~15.2]{Borthwick_book}.
\end{itemize}

\begin{figure}[H]
\centering
\captionsetup{justification=centering}
\begin{tikzpicture}[xscale=1.5, yscale=1.5]
\draw (-4,0) -- (4,0) node [right, font=\small]  {$  \partial \HH^2 $};
\draw (0,1) node [above, font=\small]  {$ \HH^2 $};
\draw (-3,0) circle [radius = 0.4];
\draw (-1.3,0) circle [radius = 0.8];
\draw (-0.1,0) circle [radius = 0.3];
\draw (1,0) circle [radius = 0.5];
\draw (2.3,0) circle [radius = 0.4];
\draw (3.5,0) circle [radius = 0.2];
\draw (-3,-0.4) node[below,font=\small] {$ D_{1} $};
\draw (-1.3,-0.8) node[below,font=\small] {$ D_{4} $};
\draw (-0.1,-0.3) node[below,font=\small] {$ D_{2} $};
\draw (1,-0.5) node[below,font=\small] {$ D_{3} $};
\draw (2.3,-0.4) node[below,font=\small] {$ D_{5} $};
\draw (3.5,-0.2) node[below,font=\small] {$ D_{6} $};
 \draw [arrow, bend left]  (-3,0.4) to (-1.3,0.8);
 \draw [arrow, bend left]  (-0.1,0.3) to (2.3,0.4);
 \draw [arrow, bend left]  (1,0.5) to (3.5,0.2);
 \draw (-2.2,0.9) node[above,font=\small] {$ \gamma_{1} $};
 \draw (1.1,0.7) node[above,font=\small] {$ \gamma_{2} $};
 \draw (2.4,0.7) node[above,font=\small] {$ \gamma_{3} $};
\end{tikzpicture}
\caption{A configuration of Schottky disks and isometries with $ m=3 $}
\end{figure}

A Fuchsian group $\Gamma$ is called \textit{elementary} if its limit set $\Lambda$ is a finite set. If $\Gamma$ is a Schottky group as above, then it is elementary if and only if $m=1,$ in which case $\Gamma\backslash \HH^2$ is a hyperbolic cylinder.

\textit{Throughout the rest of this paper we assume that $X=\Gamma\backslash \HH^2$ is a convex cocompact quotient where $\Gamma$ is a non-elementary Schottky group with Schottky data $D_1, \dots, D_{2m}$ and $\gamma_1, \dots, \gamma_{2m}$ as above. This assumption will not be repeated in the sequel.}

\subsection{Combinatorial notation for words}\label{sec:comNotation}
We will use a combinatorial notation for elements of the free group $\Gamma$, essentially following Dyatlov–Zworski \cite{DyZw18}.

\begin{itemize}
\setlength\itemsep{1em}

\item Let $\mathcal{A} = \{1, \dots, 2m\}$ be the alphabet. A word $\Aa = a_1 \cdots a_n$ is a string of letters $a_j \in \mathcal{A}$. Let $|\Aa|$ denote the length of the word, i.e., the number of letters in $\Aa.$
We introduce the empty word $\emptyset$ of length zero for technical reasons.

\item A word $\Aa = a_1 \cdots a_n$ is \textit{reduced} if $a_j \ne \overline{a_{j+1}}$ for all $1 \le j \le n-1$. Define $\mathcal{W}_n$ to be the set of reduced words of length $n$:
\[
\mathcal{W}_n = \left\{ a_1 \cdots a_n : a_j \in \mathcal{A},\, a_j \ne \overline{a_{j+1}} \right\}.
\]
Set $\mathcal{W}_0 = \{\emptyset\}$ and $\mathcal{W} = \bigsqcup_{n \ge 0} \mathcal{W}_n$. Also define $\mathcal{W}_{\ge m} = \bigsqcup_{n \ge m} \mathcal{W}_n$ and $\mathcal{W}^\circ = \mathcal{W}_{\ge 1}$.

\item For any word $\Aa$, define its reduced form $\mathsf{red}(\Aa) \in \mathcal{W}$ by repeatedly applying the rule $a \overline{a} = \emptyset$. Then $\gamma_\Aa = \gamma_{\mathsf{red}(\Aa)}$ in $\Gamma$.

\item For $\Aa = a_1 \cdots a_n \in \mathcal{W}^\circ$, write $\Aa' = a_1 \cdots a_{n-1}$. The set $\mathcal{W}$ forms a rooted tree with root $\emptyset$, and $\Aa'$ is the parent of $\Aa$.

\item For $\Aa, \Bb \in \mathcal{W}$, write $\Aa \to \Bb$ if either $\Aa = \emptyset$, $\Bb = \emptyset$, or both are non-empty and $a_n \ne \overline{b_1}$; in this case, the concatenation $\Aa\Bb$ is reduced.

\item Let $z\in \CC.$ We write $\Aa \to z$ if there exists $b \in \mathcal{A}$ such that $\Aa \to b$ and $z \in D_b$.

\item Define the mirror word $\overline{\Aa} = \overline{a_n} \cdots \overline{a_1}$.

\item Write $\Aa \prec \Bb$ if $\Aa$ is a prefix of $\Bb$, i.e., $\Bb = \Aa\Cc$ for some $\Cc \in \mathcal{W}$.

\item The map $\Aa = a_1 \cdots a_n \mapsto \gamma_\Aa = \gamma_{a_1} \cdots \gamma_{a_n}$ gives a bijection between reduced words and elements of $\Gamma$. Moreover:
\[
\gamma_{\Aa\Bb} = \gamma_\Aa \gamma_\Bb, \qquad \gamma_\Aa^{-1} = \gamma_{\overline{\Aa}}, \qquad \gamma_\Aa = \mathrm{id} \iff \Aa = \emptyset.
\]

\item For $\Aa = a_1 \cdots a_n \in \mathcal{W}^\circ$, define the disk $D_\Aa := \gamma_{\Aa'}(D_{a_n})$. Then:
\begin{enumerate}[(i)]
  \item If $\Aa \prec \Bb$, then $D_\Bb \subset D_\Aa$.
  \item If neither $\Aa \prec \Bb$ nor $\Bb \prec \Aa$, then $D_\Aa \cap D_\Bb = \emptyset$.
\end{enumerate}
Define the interval
\[
I_\Aa := D_\Aa \cap \mathbb{R},
\]
with length $|I_\Aa|$ equal to the diameter of $D_\Aa$. These intervals contract exponentially: there are constants $C > 0$ and $0 < \theta < 1$ such that
\begin{equation}\label{eq:unifContraction}
|I_\Aa| \le C \theta^{|\Aa|} \quad \text{for all } \Aa \in \mathcal{W}^\circ.
\end{equation}

\item Define the union sets:
\[
D = \bigsqcup_{a \in \mathcal{A}} D_a, \qquad I = \bigsqcup_{a \in \mathcal{A}} I_a.
\]

\item The limit set of $\Gamma$ is given by
\[
\Lambda = \bigcap_{n \ge 1} \bigsqcup_{\Aa \in \mathcal{W}_n} I_\Aa \subset \mathbb{R}.
\]
\end{itemize}

\subsection{The standard transfer operator}\label{sec:classicalTransfOper}
Given a non-empty, open set $\Omega \subset \CC$, the \textit{Bergman space} $H^2(\Omega)$ is the Hilbert space of square-integrable holomorphic functions in $\Omega$:
\begin{equation}\label{defi_bergman}
H^2(\Omega) := \left\{ f \colon \Omega \to \CC \;\text{holomorphic} \;\middle|\; \|f\| < \infty \right\},
\end{equation}
with norm
\[
\|f\|^2 := \int_\Omega |f(z)|^2 \, \mathrm{dvol}(z),
\]
where $\mathrm{vol}$ is the standard two-dimensional Lebesgue measure on $\CC$.

Let $D = \bigsqcup_{a \in \mathcal{A}} D_a \subset \CC$ be the union of the Schottky disks introduced in Section~\ref{sec:SchottkyGroups}. We define the holomorphic family of \textit{transfer operators} $\mathcal{L}_s \colon H^2(D) \to H^2(D)$, parametrized by $s \in \CC$, via
\begin{equation}\label{transf_defi}
\mathcal{L}_s f(z) := \sum_{\substack{a \in \mathcal{A} \\ a \to b}} \gamma_a'(z)^s f(\gamma_a(z)), \quad \text{for } z \in D_b.
\end{equation}
Since $\gamma_a'(z) > 0$ for $z \in I_b$, the complex power $\gamma_a'(z)^s$ is holomorphic and unambiguously defined. Explicitly,
\begin{equation}\label{defi:principal_branch}
\gamma_a'(z)^s = \exp\left( s \mathbb{L}(\gamma_a'(z)) \right), \quad \text{where} \quad \mathbb{L}(z) := \log|z| + \mathrm{arg}(z),
\end{equation}
and $\mathrm{arg} \colon \CC \setminus (-\infty, 0] \to (-\pi, \pi)$ is the principal branch of the argument.

The transfer operator $\mathcal{L}_s$ appears, for instance, in \cite[Chapter~15]{Borthwick_book}. We have the following key identity:

\begin{prop}[Fredholm determinant identity]\label{prop:Fredholm_identity}
For all $s \in \CC$, the operator $\mathcal{L}_s$ is trace class on $H^2(D)$, and
\begin{equation}\label{Fredholm_identity_equation}
Z_\Gamma(s) = \det(1 - \mathcal{L}_s).
\end{equation}
\end{prop}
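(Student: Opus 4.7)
The plan is to establish trace-class holomorphy of $\mathcal{L}_s$, compute $\tr(\mathcal{L}_s^n)$ for every $n\ge 1$ via a fixed-point formula, and re-sum the resulting expression to recover the Euler product \eqref{selbergZetaDefi}.

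\textbf{Trace class.} Since the closures of $D_1,\dots,D_{2m}$ are pairwise disjoint, for every admissible pair $a\to b$ the map $\gamma_a$ sends a neighbourhood of $\overline{D_b}$ into a compact subset of $D_a$. Choosing slightly enlarged, still pairwise disjoint disks $\widetilde D_a\supset\overline{D_a}$, one factors $\mathcal{L}_s$ as the canonical extension $H^2(D)\to H^2(\widetilde D)$ followed by the restriction $H^2(\widetilde D)\hookrightarrow H^2(D)$. The latter inclusion is nuclear of order zero by classical estimates for Bergman spaces over compactly nested domains, so $\mathcal{L}_s$ is trace class and depends holomorphically on $s\in\mathbb{C}$.

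\textbf{Trace of iterates.} Expanding the definition gives, for $z\in D_b$,
$$\mathcal{L}_s^n f(z) = \sum_{\substack{\mathbf{a}\in\mathcal{W}_n\\ a_n\to b}} \gamma_{\mathbf{a}}'(z)^{s}\,f(\gamma_{\mathbf{a}}(z)).$$
Under the orthogonal decomposition $H^2(D)=\bigoplus_{b}H^2(D_b)$, only the summands with $a_1=b$ are diagonal and hence contribute to the trace. Combined with $a_n\to b$, this is precisely the condition that $\mathbf{a}$ be cyclically reduced, in which case $\gamma_\mathbf{a}$ is a strict holomorphic self-contraction of $D_{a_1}$ with unique attracting fixed point $z_\mathbf{a}$ and derivative $\gamma_\mathbf{a}'(z_\mathbf{a})=e^{-\ell(\gamma_\mathbf{a})}$. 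The Atiyah--Bott--Lefschetz trace formula for weighted composition operators on Bergman spaces (obtained by pairing the integral kernel with the Bergman reproducing kernel of $H^2(D_{a_1})$ and summing a geometric series about $z_\mathbf{a}$) then yields
$$\tr(\mathcal{L}_s^n) = \sum_{\substack{\mathbf{a}\in\mathcal{W}_n\\\mathbf{a}\text{ cyclically reduced}}} \frac{e^{-s\ell(\gamma_\mathbf{a})}}{1-e^{-\ell(\gamma_\mathbf{a})}}.$$

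\textbf{Matching the Euler product.} Every cyclically reduced word is a cyclic rotation of some power $\mathbf{w}^k$ of a unique primitive cyclically reduced word $\mathbf{w}$, and the $|\mathbf{w}|$ distinct rotations all correspond to the same primitive $\Gamma$-conjugacy class $[\gamma_\mathbf{w}]$. Organizing the sum accordingly gives
$$\sum_{n\ge1}\frac{\tr(\mathcal{L}_s^n)}{n} = \sum_{[\gamma_0]\in[\Gamma]_{\mathsf{prim}}}\sum_{k\ge1}\frac{1}{k}\cdot\frac{e^{-sk\ell(\gamma_0)}}{1-e^{-k\ell(\gamma_0)}}.$$
Expanding $(1-e^{-k\ell(\gamma_0)})^{-1}=\sum_{j\ge0}e^{-jk\ell(\gamma_0)}$ and using $\sum_{k\ge 1}x^k/k=-\log(1-x)$ rewrites the right-hand side as $-\sum_{[\gamma_0]}\sum_{j\ge 0}\log(1-e^{-(s+j)\ell(\gamma_0)})=-\log Z_\Gamma(s)$. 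For $\mathrm{Re}(s)>\delta$ all double sums converge absolutely, so $\det(1-\mathcal{L}_s)=Z_\Gamma(s)$ in that half-plane; since both sides are entire in $s$, the identity extends to all of $\mathbb{C}$ by analytic continuation. The principal technical obstacle is the Atiyah--Bott trace identity itself: its validity rests on the strict contraction of $\gamma_\mathbf{a}$, but the explicit evaluation must be carried out carefully using the Bergman reproducing kernel (or, equivalently, via a power-series diagonalization around $z_\mathbf{a}$).
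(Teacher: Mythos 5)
The paper states Proposition~\ref{prop:Fredholm_identity} without proof, deferring to Borthwick's book \cite[Chapter~15]{Borthwick_book}, so there is no in-paper argument to compare against; your proof is a correct reconstruction of the standard one found there and in its sources (Guillop\'e--Lin--Zworski, Pollicott, Fried). The three ingredients you use---nuclear factorization of the weighted composition operator across compactly nested Bergman spaces for the trace-class claim, the Atiyah--Bott/Lefschetz fixed-point formula (precisely \cite[Lemma~15.9]{Borthwick_book}, which the paper itself invokes in the proof of Lemma~\ref{lem:pointwiseEst}) giving $\tr(\mathcal{L}_s^n)=\sum_{\mathbf{a}}\frac{e^{-s\ell(\gamma_{\mathbf a})}}{1-e^{-\ell(\gamma_{\mathbf a})}}$ over cyclically reduced length-$n$ words, and the regrouping of cyclically reduced words by primitive conjugacy class followed by geometric-series resummation---are exactly the expected approach and are carried out correctly.
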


As a consequence, $s \in \CC$ is a resonance of $X = \Gamma \backslash \HH^2$ if and only if $\mathcal{L}_s$ has $1$ as an eigenvalue with some non-zero eigenfunction $f \in H^2(D)$.

\subsection{Partitions and refined transfer operators}\label{sec:partRefOp}
In this paper, we work with \textit{refined} transfer operators. These are generalizations of the standard transfer operator $\mathcal{L}_s$ introduced by Dyatlov–Zworski \cite{DyZw18}.

Given any finite subset $Z \subset \mathcal{W}$, define:
\begin{itemize}
  \item $Z' := \{ \Aa' : \Aa \in Z \}$,
  \item $\overline{Z} := \{ \overline{\Aa} : \Aa \in Z \}$.
\end{itemize}
For any $Z \subset \mathcal{W}$ we define an operator $\mathcal{L}_{Z,s} \colon H^2(D) \to H^2(D)$ by
\begin{equation}\label{generalizedOp}
\mathcal{L}_{Z,s}f(z) := \sum_{\substack{\Aa \in (\overline{Z})' \\ \Aa \to b}} \gamma_{\Aa}'(z)^s f(\gamma_{\Aa}(z)), \quad \text{for } z \in D_b.
\end{equation}
Note that for $Z=\mathcal{W}_2$ we have $\mathcal{L}_{Z,s} = \mathcal{L}_{s}.$

The key definition here is

\begin{defi}[Partitions]
A finite set $Z \subset \mathcal{W}^\circ$ is called a \textit{partition} if there exists $N \in \mathbb{N}$ such that every $\Aa \in \mathcal{W}$ with $|\Aa| \ge N$ has a \textit{unique} prefix $\Bb \in Z$, i.e., $\Bb \prec \Aa$. Equivalently, $Z$ is a partition if we have the \textit{disjoint} union
\[
\Lambda = \bigsqcup_{\Bb \in Z} (I_{\Bb} \cap \Lambda).
\]
\end{defi}

Trivial examples for partitions include $\mathcal{W}_n$ for $n \ge 2$, in which case we have $\mathcal{L}_{\mathcal{W}_n, s} = \mathcal{L}_s^{n - 1}$.
The partitions relevant in this paper are the sets of words discretizing to some resolution $\tau > 0$:
$$
Z(\tau):= \{ \Aa\in \mathcal{W}^{\circ} : \vert I_{\Aa}\vert \le  \tau < \vert I_{\Aa'}\vert \}.
$$
The notion of partitions as well as the partitions $Z(\tau)$ were originally introduced in \cite{DyZw18}. Next, we define the \textit{$\tau$-refined transfer operator} $\mathcal{L}_{\tau,s}$ as
\begin{equation}\label{defiRefinedTO}
\mathcal{L}_{\tau,s}f(z) := \mathcal{L}_{Z(\tau),s}f(z)= \sum_{\substack{ \Aa\in Y(\tau) \\ \Aa\to b}} \gamma_{\Aa}'(z)^s f(\gamma_{\Aa} (z))\ \mathrm{if}\ z\in D_b,
\end{equation}
where we put
\begin{equation}\label{defi:Y}
Y(\tau) := \overline{Z(\tau)}'.
\end{equation}

The operator $\mathcal{L}_{\tau,s}$ is well-defined if $Y(\tau) \subset \mathcal{W}^\circ$, which holds provided $\tau$ is sufficiently small so that $Z(\tau) \subset \mathcal{W}_{\ge 2}$.

The main reason for using this special family of operators is that we can control the size of the set $Y(\tau)$ as well as the absolute values of the derivatives $\gamma_{\Aa}'$ for $\Aa\in Y(\tau)$ in a uniform way, see Lemma \ref{lem:YZ} below. This is what enables us to obtain explicit exponents in Theorem \ref{main_thm}.

The fundamental fact about partitions is

\begin{lemma}\label{lem:partEigen}
Let $Z \subset \mathcal{W}_{\ge 2}$ be a partition. Then there exists a holomorphic family of trace class operators $T_{Z,s}$ such that
\begin{equation}\label{factWithTraceCLass}
1 - \mathcal{L}_{Z,s} = (1 + T_{Z,s})(1 - \mathcal{L}_s).
\end{equation}
In particular,
\begin{equation}\label{detPartitions}
\det(1 - \mathcal{L}_{Z,s}) = \det(1 + T_{Z,s}) \cdot Z_\Gamma(s).
\end{equation}
\end{lemma}

\begin{remark}
Lemma~\ref{lem:partEigen} extends Lemma~2.4 from Dyatlov--Zworski \cite{DyZw18}, which states that any $1$-eigenfunction of $\mathcal{L}_s$ is also a $1$-eigenfunction of $\mathcal{L}_{Z,s}$. Our result is stronger: it shows that the Fredholm determinant $\det(1 - \mathcal{L}_{Z,s})$ is a holomorphic multiple of the Selberg zeta function $Z_\Gamma(s)$. In particular, every zero of $Z_\Gamma(s)$ is also a zero of $\det(1 - \mathcal{L}_{Z,s})$, with at least the same multiplicity. The proof is a straightforward adaptation of the argument in \cite{DyZw18}.
\end{remark}

\begin{proof}
The identity in \eqref{detPartitions} follows from taking determinants on both sides of \eqref{factWithTraceCLass} and then applying Proposition \ref{prop:Fredholm_identity}.For the latter, we argue by induction on $\sum_{\textbf{b}\in Z'} \vert \textbf{b}\vert$. If $Z = \mathcal{W}_2$ then $\mathcal{L}_{Z,s} = \mathcal{L}_s$, so \eqref{factWithTraceCLass} holds true with $T_{\mathcal{W}_2,s} = 0$. If $Z \neq \mathcal{W}_2$, choose a word $\textbf{d}c\in Z$ of maximal length, where $\textbf{d}\in \mathcal{W}^\circ$ and $c\in \mathcal{A}$. 
\begin{figure}[h!]
\centering
\begin{forest}
for tree={
    draw,
    rectangle,
    rounded corners,
    align=center,
    edge={-}
}
[$\emptyset$ 
    [1,
        [11, fill=lightgray]
        [12, 
	        [121, fill=lightgray, scale=0.7]
    	    [122, fill=lightgray, scale=0.7]
        	[123, fill=lightgray, scale=0.7]
        ]
        [14, fill=lightgray]
    ] 
    [2 
        [21, fill=lightgray]
        [22, fill=lightgray]
        [23, fill=lightgray]
    ]
    [3 
        [32, fill=black, text=white
	        [321, fill=lightgray, scale=0.6]
    	    [323, fill=lightgray, scale=0.6]
        	[323, fill=lightgray, scale=0.6]        
        ]
        [33, fill=lightgray]
        [34,
	        [341, fill=lightgray, scale=0.6]
    	    [343, fill=lightgray, scale=0.6]
        	[344, fill=lightgray, scale=0.6]                
        ]
    ] 
    [4 
        [41, fill=lightgray]
        [43, fill=lightgray]
        [44,
        	[441, fill=lightgray, scale=0.7]
    	    [443, fill=lightgray, scale=0.7]
        	[444, fill=lightgray, scale=0.7]        
        ]
    ]
]
\end{forest}
\caption{An example of a partition $Z$, with elements of $Z$ shaded grey in the tree of words. The solid black word is one possible choice of $\textbf{d}$ in the proof of Lemma \ref{lem:partEigen}.}
\label{fig:my-tree}
\end{figure}
Then $Z$ has the form (cf. Figure~\ref{fig:my-tree})
\begin{equation}\label{eq:newform}
Z = (Z_0 \smallsetminus \{\textbf{d}\}) \sqcup \{ \textbf{d}a : a \in \mathcal{A},\, \textbf{d} \to a \},
\end{equation}
where $Z_0$ is a partition containing $\textbf{d}$. By the inductive hypothesis, we have
\begin{equation}\label{indHyp}
1 - \mathcal{L}_{Z_0,s} = (1 + T_{Z_0,s})(1 - \mathcal{L}_s).
\end{equation}
We write $\mathcal{L}_{Z,s}$ as
\[
\mathcal{L}_{Z,s} = \sum_{\Aa \in (\overline{Z})'} \nu_{\Aa,s},
\]
where each $\nu_{\Aa,s} \colon H^2(D) \to H^2(D)$ is defined by
\[
\nu_{\Aa,s}(f)(z) :=
\begin{cases}
\gamma_\Aa'(z)^s f(\gamma_\Aa(z)) & \text{if } \Aa \to z, \\
0 & \text{otherwise}.
\end{cases}
\]
Using \eqref{eq:newform}, we write
\[
\mathcal{L}_{Z,s} = \mathcal{L}_{Z_0 \smallsetminus \{ \textbf{d} \},s} + \sum_{a \in \mathcal{A}} \nu_{\overline{\textbf{d}a}',s}.
\]
For each $a$ such that $\textbf{d} \to a$, observe that
\[
\overline{\textbf{d}a}' = \overline{a} \, \overline{\textbf{d}}',
\quad \text{and hence} \quad
\nu_{\overline{\textbf{d}a}',s} = \nu_{\overline{\textbf{d}}',s} \nu_{a,s}.
\]
Inserting this into the previous expression for $\mathcal{L}_{Z,s}$, we get
\begin{align*}
\mathcal{L}_{Z,s}
&= \mathcal{L}_{Z_0 \smallsetminus \{ \textbf{d} \},s} + \sum_{a \in \mathcal{A}} \nu_{\overline{\textbf{d}}',s} \nu_{a,s} \\
&= \mathcal{L}_{Z_0,s} - \nu_{\overline{\textbf{d}}',s} + \nu_{\overline{\textbf{d}}',s} \mathcal{L}_s \\
&= \mathcal{L}_{Z_0,s} - \nu_{\overline{\textbf{d}}',s}(1 - \mathcal{L}_s).
\end{align*}
Combining this with \eqref{indHyp}, we find
\[
1 - \mathcal{L}_{Z,s} = \left(1 + T_{Z_0,s} + \nu_{\overline{\textbf{d}}',s}\right)(1 - \mathcal{L}_s),
\]
which establishes the relation in \eqref{factWithTraceCLass} with
\begin{equation}\label{eq:recursion}
T_{Z,s} := T_{Z_0,s} + \nu_{\overline{\textbf{d}}',s}.
\end{equation}
Since both $T_{Z_0,s}$ and $\nu_{\overline{\textbf{d}}',s}$ are trace class, so is $T_{Z,s}$, completing the proof.
\end{proof}

\begin{remark}
Although not important for this paper, it is worth noting that repeated application of \eqref{eq:recursion} yields the identity
\[
T_{Z,s} = \mathcal{L}_{Z^\prec,s},
\]
where $Z^\prec$ denotes the set of all prefixes of words in $Z$:
\[
Z^\prec := \left\{ \textbf{b} \in \mathcal{W} \mid \exists\, \textbf{a} \in Z \text{ such that } \textbf{b} \prec \textbf{a} \right\}.
\]
In the tree representation of $\mathcal{W}$ (see Figure~\ref{fig:my-tree}), $Z^\prec$ corresponds to all ancestors of $Z$. In the specific case $Z = \mathcal{W}_{n+1}$, we have
\[
Z^\prec = \bigcup_{1 \le m \le n} \mathcal{W}_m,
\]
and Equation \eqref{factWithTraceCLass} becomes the standard identity
\[
1 - \mathcal{L}_s^n = (1 + \mathcal{L}_s + \cdots + \mathcal{L}_s^{n-1})(1 - \mathcal{L}_s).
\]
\end{remark}

It is straightforward to generalize Lemma~\ref{lem:partEigen} to products:

\begin{lemma}\label{lem:holomMultiple}
Let $k \ge 1$ and let $Z_1, \dots, Z_k \subset \mathcal{W}_{\ge 2}$ be a collection of partitions. Then there exists a holomorphic family of trace class operators $T_s$ (depending on $Z_1, \dots, Z_k$) such that
\[
1 - \mathcal{L}_{Z_1,s} \cdots \mathcal{L}_{Z_k,s} = (1 + T_s)(1 - \mathcal{L}_s).
\]
Consequently,
\[
\det\left(1 - \mathcal{L}_{Z_1,s} \cdots \mathcal{L}_{Z_k,s}\right)
\]
is a holomorphic multiple of $Z_\Gamma(s)$.
\end{lemma}

\begin{proof}
We proceed by induction on $k$. For $k = 1$, the statement is Lemma~\ref{lem:partEigen}. Suppose $k > 1$. By the inductive hypothesis, there exists a holomorphic family of trace class operators $\widetilde{T}_s$ such that
\[
1 - \mathcal{L}_{Z_1,s} \cdots \mathcal{L}_{Z_{k-1},s} = (1 + \widetilde{T}_s)(1 - \mathcal{L}_s).
\]
Applying Lemma~\ref{lem:partEigen} to $\mathcal{L}_{Z_k,s}$, we have
\[
1 - \mathcal{L}_{Z_k,s} = (1 + T_{Z_k,s})(1 - \mathcal{L}_s).
\]
Using this, we obtain
\begin{align*}
1 - \mathcal{L}_{Z_1,s} \cdots \mathcal{L}_{Z_k,s}
&= 1 - \left(1 - (1 + \widetilde{T}_s)(1 - \mathcal{L}_s)\right) \mathcal{L}_{Z_k,s} \\
&= 1 - \left(1 - (1 + \widetilde{T}_s)(1 - \mathcal{L}_s)\right) \left(1 - (1 + T_{Z_k,s})(1 - \mathcal{L}_s)\right) \\
&= \left[(1 + \widetilde{T}_s) + (1 + T_{Z_k,s}) - (1 + \widetilde{T}_s)(1 - \mathcal{L}_s)(1 + T_{Z_k,s})\right](1 - \mathcal{L}_s).
\end{align*}
Since sums and products of trace class operators remain trace class, the bracketed expression defines an operator of the form $1 + T_s$ for some trace class $T_s$, which depends holomorphically on $s \in \CC$.
\end{proof}

\subsection{Bounds for Schottky groups}\label{sec:boundsForDeriv}
We now record standard estimates for elements of Schottky groups. We introduce new notation following Magee--Naud \cite{NaudMagee}. For each $a \in \mathcal{A}$, fix a point $o_a \in I_a$. These are treated as fixed reference points. For any $\textbf{a} \in \mathcal{W}^\circ$, define
\[
o_{\textbf{a}} := o_b,
\]
for some $b \in \mathcal{A}$ with $\textbf{a} \to b$, and let $o_\emptyset$ be any point in $D$. Then for $\textbf{a} \in \mathcal{W}$, define
\begin{equation}\label{defi:Upsilon}
\Upsilon_{\textbf{a}} := \gamma_{\textbf{a}}'(o_{\textbf{a}}).
\end{equation}
From \cite{NaudMagee}, we have the following estimates:

\begin{lemma}[Bounds for derivatives]\label{lem:boundsderivatives}
The following estimates hold with implied constants depending only on $\Gamma$:
\begin{enumerate}[(i)]
\item \label{part:ups} For all $b \in \mathcal{A}$ and all $z \in D_b$ with $\Aa \to b$, we have
\[
|\gamma'_{\Aa}(z)| \asymp \vert I_\Aa\vert \asymp \Upsilon_{\Aa}.
\]

\item For all $\Aa \in \mathcal{W}^\circ$, we have
\[
\Upsilon_{\Aa'} \asymp \Upsilon_{\Aa}.
\]

\item \label{part:mirrorEst} For all $\Aa \in \mathcal{W}^\circ$, we have
\[
\Upsilon_{\overline{\Aa}} \asymp \Upsilon_{\Aa}.
\]

\item \label{part:almostMultip} For all $\Aa, \Bb \in \mathcal{W}^\circ$ with $\Aa \to \Bb$, we have
\[
\Upsilon_{\Aa\Bb} \asymp \Upsilon_{\Aa} \Upsilon_{\Bb}.
\]
\end{enumerate}
\end{lemma}

\begin{lemma}[Estimates for $Z(\tau)$ and $Y(\tau)$]\label{lem:YZ}
For all $\tau > 0$, the following estimates hold with constants depending only on $\Gamma$:
\begin{enumerate}[(i)]
\item For all $\Aa \in Z(\tau)$ and all $\Aa \in Y(\tau)$, we have
\[
\Upsilon_{\Aa} \asymp \tau.
\]

\item \label{cardinalityzy} The cardinalities satisfy
\[
|Z(\tau)| \asymp |Y(\tau)| \asymp \tau^{-\delta}.
\]

\item \label{lengthsofwords} For all $\Aa \in Z(\tau)$ and all $\Aa \in Y(\tau)$, we have
\[
|\Aa| \asymp \log(\tau^{-1}).
\]
\end{enumerate}
\end{lemma}

\begin{proof}
The estimates for $Z(\tau)$ appear in \cite{Bourgain_Dyatlov, NaudMagee}. The bounds for $Y(\tau)$ follow from those for $Z(\tau)$ and Lemma~\ref{lem:boundsderivatives}.
\end{proof}

Also helpful is the following lemma:

\begin{lemma}\label{lem:cardsets}
There exists a constant \( C > 0 \), depending only on \( \Gamma \), such that for all \( \tau > 0 \), we have
\[
\#\left\{ \Aa \in \mathcal{W} : \Upsilon_{\Aa} > \tau \right\} \le C(\tau^{-\delta} + 1).
\]
\end{lemma}

\begin{proof}
By \cite[Lemma~10]{CalMag25}, the Frobenius norm of \( \gamma_{\Aa} \) for all \( \Aa \in \mathcal{W} \) satisfies
\[
\Vert \gamma_{\Aa} \Vert \asymp_\Gamma \vert I_\Aa\vert^{-1/2} \asymp_\Gamma \Upsilon_\Aa^{-1/2}.
\]
Moreover, we have the well-known lattice point estimate for all \( x > 0 \) (see for instance \cite[Theorem~14.22]{Borthwick_book}):
\[
\#\left\{ \gamma \in \Gamma : \Vert \gamma \Vert < x \right\} \ll x^{2\delta} + 1.
\]
Combining these two estimates yields the lemma.
\end{proof}

\subsection{Refined function spaces}\label{sec:refinedFuncti}
In this paper, we adopt the approach of Guillop\'{e}--Lin–Zworski \cite{GLZ}. The idea is to let transfer operators act on \textit{refined} function spaces. To construct these spaces, fix a small parameter $h > 0$ and consider the real $h$-neighbourhood of the limit set:
\begin{equation}\label{defi:realhneighbourhood}
\Lambda(h) := \Lambda + (-h, h).
\end{equation}
The following structural result is known from \cite{GLZ}; see also \cite[Lemma~15.14]{Borthwick_book}.

\begin{lemma}[Structure of $\Lambda(h)$]\label{lem:structureLambdah}
There exists a constant $C > 0$ such that for all sufficiently small $h > 0$, the set $\Lambda(h)$ is a union of at most $C h^{-\delta}$ connected components, each with diameter at most $Ch$.
\end{lemma}
We can therefore write
\begin{equation}\label{LambdaUnion}
\Lambda(h) = \bigsqcup_{l = 1}^{N(h)} I_l(h),
\end{equation}
where $I_l(h) \subset \mathbb{R}$ are real intervals with mutually disjoint closures, satisfying $|I_l(h)| \le Ch$ and $N(h) \le C h^{-\delta}$. Let $D_l(h) \subset \CC$ be the unique open Euclidean disk centered on the real line such that $D_l(h) \cap \mathbb{R} = I_l(h)$. Define the refined domain:
\begin{equation}\label{defi:omega(h)}
\Omega(h) := \bigsqcup_{l = 1}^{N(h)} D_l(h).
\end{equation}
For each $b \in \mathcal{A}$, let
\[
\Omega_b(h) := \Omega(h) \cap D_b.
\]
Given a partition $Z \subset \mathcal{W}_{\ge 2}$, we define the refined transfer operator $\mathcal{L}_{Z,s} \colon H^2(\Omega(h)) \to H^2(\Omega(h))$ by
\begin{equation}\label{refOpRefSpace}
\mathcal{L}_{Z,s}f(z) := \sum_{\substack{\Aa \in \overline{Z}' \\ \Aa \to b}} \gamma'_{\Aa}(z)^s f(\gamma_{\Aa}(z)), \quad \text{for } z \in \Omega_b(h).
\end{equation}
This definition is similar to \eqref{generalizedOp}, except that here we impose a restriction on the range of $z$.

The following lemma is crucial. 

\begin{lemma}[Contraction in $\Omega(h)$]\label{lem:contractionInH}
There exist $N_0 \in \mathbb{N}$ and $h_0 > 0$, depending only on $\Gamma$, such that for all $h \in (0, h_0)$, all $b \in \mathcal{A}$, all $z \in \Omega_b(h)$, and all $\Aa \in \mathcal{W}_{\ge N_0}$ with $\Aa \to b$, we have
\[
\mathrm{dist}(\gamma_{\Aa}(z), \partial \Omega(h)) > h/2.
\]
In other words, $\gamma_{\Aa}(\Omega_b(h)) \subset \Omega(h/2)$.
\end{lemma}

\begin{proof}
The argument is the same as in \cite[Lemma~3.2]{Naud14}. Fix $b \in \mathcal{A}$, $z \in \Omega_b(h)$, and $\Aa \in \mathcal{W}^\circ$ with $\Aa \to b$.  It suffices to show $\gamma_{\Aa}(z) \in \Omega(h/2)$ for $|\Aa|$ large.

Since $\Omega(h)$ is a union of Euclidean disks centered on $\mathbb{R}$ and $\gamma_{\Aa}$ is a Möbius transformation, we may assume $z \in \Lambda(h) \cap I_b$. Then $z$ is $h$-close to some $p \in \Lambda \cap I_b$, so $|z - p| \le h$. By uniform contraction \eqref{eq:unifContraction}, there are constants $C > 0$ and $0 < \theta < 1$ such that
\[
|\gamma_{\Aa}(z) - \gamma_{\Aa}(p)| \le C \theta^{|\Aa|} h.
\]
Choosing $N_0$ large enough (and independently of $h$) shows that if $|\Aa| \ge N_0$, then $|\gamma_{\Aa}(z) - \gamma_{\Aa}(p)| \le h/2$. Since $\Lambda$ is $\Gamma$-invariant, $\gamma_{\Aa}(p) \in \Lambda$, implying that $\mathrm{dist}(\gamma_{\Aa}(z), \Lambda) \le h/2.$ Thus, $\gamma_{\Aa}(z) \in \Omega(h/2)$, as required.
\end{proof}

A direct consequence of this is

\begin{lemma}[Well-defined action on refined space]\label{lem:welldefinedness}
Let $N_0 \in \mathbb{N}$ and $h_0 > 0$ be as in Lemma~\ref{lem:contractionInH}. For every finite set $Z \subset \mathcal{W}_{\ge N_0}$ and every $h \in (0, h_0)$, the operator
\begin{equation}\label{defi:refTOonRefS}
\mathcal{L}_{Z,s} \colon H^2(\Omega(h)) \to H^2(\Omega(h)), \quad s \in \CC
\end{equation}
defined by Equation~\eqref{refOpRefSpace}, is well-defined and of trace class.

Moreover, there exists $\tau_0 > 0$ such that for all $\tau \in (0, \tau_0)$ and all $h \in (0, h_0)$, the estimate
\[
\mathrm{dist}(\gamma_{\Aa}(z), \partial \Omega(h)) > h/2
\]
holds for all $b \in \mathcal{A}$ and all $\Aa \in Y(\tau)$ with $\Aa \to b$. Consequently, the $\tau$-refined operator
\begin{equation}\label{defi:refTOonRefS_2}
\mathcal{L}_{\tau,s} \colon H^2(\Omega(h)) \to H^2(\Omega(h)), \quad s \in \CC
\end{equation}
is also well-defined and trace class.
\end{lemma}

\begin{proof}
The first part follows from Lemma~\ref{lem:contractionInH}. For the second, use Part~\eqref{lengthsofwords} of Lemma~\ref{lem:YZ} to choose $\tau_0 > 0$ such that $Y(\tau) \subset \mathcal{W}_{\ge N_0}$ for all $\tau \in (0, \tau_0)$. 
\end{proof}

\subsection{Some bounds for $\Omega(h)$}\label{sec:boundsForOmega}
We now record several key estimates concerning the refined set $\Omega(h)$.

\begin{lemma}[Volume bound]\label{lem:vol}
There exists a constant $C > 0$, depending only on $\Gamma$, such that for all $h > 0$, 
\[
\vol(\Omega(h)) \le C h^{-\delta + 2}.
\]
\end{lemma}

\begin{proof}
By Lemma~\ref{lem:structureLambdah}, $\Omega(h)$ is a disjoint union of at most $C h^{-\delta}$ disks, each of radius $\le C h$. Since each disk has area $\le \pi C^2 h^2$, this yields the stated bound.
\end{proof}

\begin{lemma}[Taylor expansion of $\mathbb{L}(\gamma_{\Aa}'(z))$ in $\Omega(h)$]\label{lem:taylor}
Let $\Aa \in \mathcal{W}^\circ$, $b \in \mathcal{A}$, and $z = x + i y \in \Omega(h)$ with $\Aa \to b$ and $z \in \Omega_b(h)$. Then, for all sufficiently small $h > 0$ in terms of $\Gamma$,
\[
\mathbb{L}(\gamma_{\Aa}'(z)) = \log(\gamma_{\Aa}'(x))  + O(h).
\]
where the implied constant depends only on $\Gamma$.
\end{lemma}

\begin{proof}
Write $\gamma_{\Aa}$ as a Möbius transformation:
\[
\gamma_{\Aa}(x) = \frac{a_{\Aa} x + b_{\Aa}}{c_{\Aa} x + d_{\Aa}}, \quad \gamma_{\Aa}'(x) = \frac{1}{(c_{\Aa} x + d_{\Aa})^2},
\]
where
\[
\gamma_{\Aa} = \pmat{a_{\Aa}}{b_{\Aa}}{c_{\Aa}}{d_{\Aa}}\in \mathrm{SL}_2(\R).
\]
Its $n$-th derivative satisfies
\[
\gamma_{\Aa}^{(n)}(x) = n! \left( \frac{1}{x_{\Aa} - x} \right)^{n - 1} \gamma_{\Aa}'(x), \quad \text{where } x_{\Aa} := \gamma_{\Aa}^{-1}(\infty) = -\frac{d_{\Aa}}{c_{\Aa}}.
\]
Since $z \in D_b$ and $x_{\Aa} \in D_{\overline{a}_n}$ (with $\overline{a}_n \neq b$), we have $|z - x_{\Aa}| \asymp_\Gamma 1 $. In particular, for all $x \in I_b$ and all $n \in \mathbb{N}$,
\begin{equation}\label{eq:bound_gamma_derivative}
|\gamma_{\Aa}^{(n)}(x)| \le n! C^n \gamma_{\Aa}'(x)
\end{equation}
for some $C > 0$ depending only on $\Gamma$.

Now let $z = x + i y \in \Omega_b(h)$, so $|y| = O(h)$. Taylor-expand $\gamma_{\Aa}'(z)$ around $iy$:
$$
\gamma_{\Aa}'(z)  = \gamma_{\Aa}'(x+iy)= \sum_{n=0}^\infty  \frac{\gamma_{\Aa}^{(n+1)}(x)}{n!} (iy)^n.
$$
Using \eqref{eq:bound_gamma_derivative}, we see that for all $h>0$ sufficiently small in terms of $\Gamma$,
\begin{equation}\label{afterTayloer}
\gamma_{\Aa}'(z) = \gamma_{\Aa}'(x) +  O( y  \gamma_{\Aa}'(x) ) = \gamma_{\Aa}'(x) \left(  1  +  O( y ) \right).
\end{equation}
Taking logarithms using the principal branch $\mathbb{L}$ from \eqref{defi:principal_branch}, we get
\[
\mathbb{L}(\gamma_{\Aa}'(z)) = \log(\gamma_{\Aa}'(x)) + \mathbb{L}\left(1 + O(y)\right),
\]
where we used the rule
$$
\mathbb{L}(az) = \log(a) + \mathbb{L}(z)
$$
for all $a\in \R_{>0}$ and $z\in \C\setminus (-\infty,0].$ Using the Taylor series $\mathbb{L}(1 + u) = u - \frac{u^2}{2} + \dots$ for $\vert u\vert < 1$, we conclude that for all $h>0$ sufficiently small in terms of $\Gamma$,
\[
\mathbb{L}(\gamma_{\Aa}'(z)) = \log(\gamma_{\Aa}'(x))  + O(h).
\]
\end{proof}

\begin{lemma}[Bound for complex powers of derivatives]\label{lem:crucial_estimate}
There exists $C > 0$, depending only on $\Gamma$, such that for all $b \in \mathcal{A}$, all $\Aa \in \mathcal{W}^\circ$ with $\Aa \to b$, and all $s = \sigma + i t \in \CC$ with $\sigma > 0$,
\[
z \in \Omega_b(h) \Longrightarrow |\gamma_{\Aa}'(z)^s| \le (C \Upsilon_{\Aa})^{\sigma} \exp(C h |t|).
\]
\end{lemma}

\begin{proof}
Let $z = x + i y \in \Omega_b(h)$. By Lemma~\ref{lem:taylor}, we have
\begin{align*}
\gamma_{\Aa}'(z)^s &= \gamma_{\Aa}'(z)^{\sigma} \exp\left(i t \mathbb{L}(\gamma_{\Aa}'(z))\right) \\
&= \gamma_{\Aa}'(z)^{\sigma} \exp\left(i t \left(\log(\gamma_{\Aa}'(x)) +  O(h)\right)\right).
\end{align*}
Taking absolute values yields
\[
|\gamma_{\Aa}'(z)^s| = |\gamma_{\Aa}'(z)|^{\sigma} \exp\left(O(h |t|)\right).
\]
Using $|y| = O(h)$, as well as $\gamma_{\Aa}'(z) \asymp \Upsilon_{\Aa}$ from Lemma~\ref{lem:boundsderivatives}, we conclude that
\[
|\gamma_{\Aa}'(z)^s| \le (C \Upsilon_{\Aa})^{\sigma} \exp(O(h |t|)),
\]
as claimed.
\end{proof}

\begin{remark}
\begin{itemize}
\item Lemma~\ref{lem:crucial_estimate} motivates the choice $h := |t|^{-1}$. This ensures that exponential terms of the form $\exp(C h |t|)$ remain uniformly bounded, allowing us to eliminate exponential growth in estimates. This is the key benefit of working in the refined function spaces $H^2(\Omega(h))$.

\item Lemma~\ref{lem:contractionInH} implies that for any finite collection of partitions $Z_1, Z_2, \dots, Z_k \subset \mathcal{W}_{\ge N_0}$, and for sufficiently small $h > 0$, the operator
\[
\mathcal{L}_{Z_1,s} \cdots \mathcal{L}_{Z_k,s} \colon H^2(\Omega(h)) \to H^2(\Omega(h))
\]
is well-defined and trace class. By carefully applying the Lefschetz fixed point formula (cf.~\cite[Lemma~15.9]{Borthwick_book}), one finds that the traces of its powers, and hence the Fredholm determinant $\det(1 - \mathcal{L}_{Z_1,s} \cdots \mathcal{L}_{Z_k,s})$, are independent of $h$. By Lemma~\ref{lem:holomMultiple}, this determinant equals $Z_\Gamma(s)$ times an entire function.
\end{itemize}
\end{remark}

\subsection{Properties of Bergman kernels}\label{sec:Bergman}
We now state some basic facts about Bergman kernels, which play a crucial role in our proof of Theorem~\ref{main_thm}. For an in-depth account of the material given here, see \cite{Krantz}.

Let $\Omega \subset \CC$ be a non-empty bounded (possibly disconnected) open set and let $H^2(\Omega)$ denote the associated Bergman space. As a closed subspace of $L^2(\Omega)$, $H^2(\Omega)$ is separable and admits an orthonormal basis $(\varphi_n)$. Thus, any $f \in H^2(\Omega)$ has the expansion
\[
f(z) = \sum_n c_n(f) \varphi_n(z), \quad
c_n(f) = \int_\Omega f(w) \overline{\varphi_n(w)} \dvol(w),
\]
with convergence absolute and uniform on compact subsets of $\Omega$.

By the Riesz representation theorem, there exists a unique function $\overline{B_\Omega(z,\cdot)} \in H^2(\Omega)$, called the \textit{Bergman reproducing kernel}, such that
\begin{equation}\label{berg:int}
f(z) = \int_\Omega B_\Omega(z,w) f(w) \dvol(w).
\end{equation}
We now derive an explicit expression for $B(z,w)$. For any fixed $z\in \Omega$, we can expand $\overline{B(z,w)}$ as follows:
\begin{equation}\label{berg:exp}
\overline{B(z,w)} = \sum_n c_n \varphi_n(w), 
\end{equation}
where 
\begin{equation}\label{berg:coeff}
c_n = \int_\Omega \overline{B(z,w)} \, \overline{\varphi_n(w)} \dvol(w).
\end{equation}
Inserting \eqref{berg:coeff} and \eqref{berg:exp} into \eqref{berg:int}, and using the uniqueness of the Bergman kernel, we obtain
\begin{equation}\label{eq:formulaBergmann}
B(z,w) = \sum_n \varphi_n(z) \overline{\varphi_n(w)},
\end{equation}
where the series is uniformly convergent on compact subsets of $ \Omega\times \Omega. $

\begin{lemma}[Basic properties of the Bergman kernel]\label{lem:BergmanBasicProp}
The following hold:
\begin{enumerate}[(i)]
\item \label{part:bergmanDiffComp} If $z, w$ lie in distinct connected components of $\Omega$, then $B_\Omega(z,w) = 0$.

\item \label{part:bergmanIneq} For all $z, w \in \Omega$,
\[
|B_\Omega(z,w)|^2 \le B_\Omega(z,z) B_\Omega(w,w).
\]

\item \label{part:bergmanVariational} Variational formula:
\[
B_\Omega(z,z) = \sup\left\{ |f(z)|^2 : f \in H^2(\Omega),\, \|f\| = 1 \right\}.
\]

\item \label{part:bergmanComp} Comparison property: If $z \in \Omega_1 \subset \Omega_2$, then
\[
B_{\Omega_2}(z,z) \le B_{\Omega_1}(z,z).
\]

\item \label{part:bergmanDisk} Explicit formula for disks:
For $D(z_0,r) := \{ z \in \CC : |z - z_0| < r \}$,
\[
B_{D(z_0,r)}(z,w) = \frac{2r^2}{\pi \left( r^2 - (z - z_0)(\overline{w} - \overline{z}_0) \right)^2}.
\]
\end{enumerate}
\end{lemma}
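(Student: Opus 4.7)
The plan is to derive all five items from the series representation \eqref{eq:formulaBergmann}, $B_\Omega(z,w) = \sum_n \varphi_n(z)\overline{\varphi_n(w)}$, together with the reproducing property \eqref{berg:int}, using little more than Cauchy--Schwarz.

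For (i), I would decompose $\Omega$ into its connected components; the space $H^2(\Omega)$ then splits as an orthogonal direct sum of the Bergman spaces of the components (via extension by zero). An orthonormal basis adapted to this decomposition consists of functions each supported on a single component, so every term $\varphi_n(z)\overline{\varphi_n(w)}$ in \eqref{eq:formulaBergmann} vanishes when $z$ and $w$ lie in different components. Part (ii) is immediate from Cauchy--Schwarz applied to the series: $\vert B_\Omega(z,w)\vert^2 \leqslant \sum_n \vert \varphi_n(z)\vert^2 \sum_n \vert \varphi_n(w)\vert^2 = B_\Omega(z,z) B_\Omega(w,w)$. For the variational formula (iii), expanding an arbitrary unit vector $f = \sum_n c_n \varphi_n \in H^2(\Omega)$ and applying Cauchy--Schwarz termwise gives $\vert f(z)\vert^2 \leqslant \sum_n \vert c_n\vert^2 \sum_n \vert \varphi_n(z)\vert^2 = B_\Omega(z,z)$; equality is attained by the normalized reproducing kernel $f(w) = B_\Omega(w,z)/\sqrt{B_\Omega(z,z)}$, whose unit-normality is itself a consequence of \eqref{berg:int} applied to the function $B_\Omega(\cdot,z)$.

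Part (iv) then follows by combining (iii) with the observation that the restriction map $H^2(\Omega_2)\to H^2(\Omega_1)$, $f\mapsto f\vert_{\Omega_1}$, is norm-decreasing when $\Omega_1 \subset \Omega_2$: any unit vector realizing the supremum for $B_{\Omega_2}(z,z)$ restricts to a vector of $L^2(\Omega_1)$-norm at most one, whose value at $z$ is therefore dominated by $\sqrt{B_{\Omega_1}(z,z)}$. Finally, for (v), translating by $z_0$ reduces matters to a disk centred at the origin, where the monomials $\{z^n\}_{n\geqslant 0}$ form an orthogonal basis of $H^2(D(0,r))$ with squared norms $\pi r^{2n+2}/(n+1)$ (a one-line polar-coordinate integration). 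Inserting the normalized basis into \eqref{eq:formulaBergmann} and summing the series $\sum_{n\geqslant 0} (n+1)x^n = (1-x)^{-2}$ at $x = (z-z_0)\overline{(w-w_0)}/r^2$ yields the stated closed form. I do not foresee any genuine obstacle here; the only subtleties are the standard justifications of termwise convergence, which are guaranteed by the uniform convergence of \eqref{eq:formulaBergmann} on compact subsets already noted in the excerpt.
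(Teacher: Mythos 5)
Your argument takes the same route as the paper: parts (i)--(iv) follow from the series representation $B_\Omega(z,w)=\sum_n\varphi_n(z)\overline{\varphi_n(w)}$ together with Cauchy--Schwarz, and part (v) is obtained by inserting the explicit normalized monomial basis of $H^2(D(z_0,r))$ into that series and summing. The paper merely names these ingredients; you supply the details (orthogonal decomposition over components in (i), the extremizer $B_\Omega(\cdot,z)/\sqrt{B_\Omega(z,z)}$ in (iii), the norm-decreasing restriction in (iv)), and all of those steps are sound.

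One point worth flagging: carrying out the computation in (v) honestly gives
$$
B_{D(z_0,r)}(z,w) \;=\; \frac{1}{\pi r^2}\sum_{n\geqslant 0}(n+1)\left(\frac{(z-z_0)(\overline{w}-\overline{w_0})}{r^2}\right)^{n} \;=\; \frac{r^{2}}{\pi\left(r^{2}-(z-z_0)(\overline{w}-\overline{w_0})\right)^{2}},
$$
\emph{without} the factor $2$ appearing in the stated formula. That factor is a typo in the paper: the basis the paper itself writes down yields the same expression as yours, and the subsequent Lemma~3.7 (and its proof, which uses $B_{D(z,r)}(z,z)\leqslant 1/(\pi r^2)$) is only consistent with the version lacking the $2$. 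So your derivation is correct; you should just not assert that it ``yields the stated closed form'' without noticing the discrepancy. It would also be worth a half-sentence in (iii)--(iv) to dispose of the degenerate case $B_\Omega(z,z)=0$ (there the supremum is $0$ and no normalization is needed), but that is cosmetic.
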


\begin{proof}
Parts \eqref{part:bergmanDiffComp}–\eqref{part:bergmanComp} follow from \eqref{eq:formulaBergmann} and the Cauchy–Schwarz inequality. To prove \eqref{part:bergmanDisk}, use the orthonormal basis
\[
\varphi_n(z) = \sqrt{\frac{n+1}{\pi r^2}} \left( \frac{z - z_0}{r} \right)^n, \quad n \in \mathbb{N}_0
\]
in \eqref{eq:formulaBergmann}.
\end{proof}

\begin{lemma}[Upper bound for the Bergman kernel]\label{lem:UpperBoundBergman}
Define
\[
\mathrm{dist}(z, \partial\Omega) := \inf_{z' \in \partial\Omega} |z - z'|.
\]
Then for all $z, w \in \Omega$,
\begin{equation}\label{UpperBoundBergmanineq}
|B_\Omega(z,w)| \le \frac{1}{\pi \, \mathrm{dist}(z, \partial\Omega) \, \mathrm{dist}(w, \partial\Omega)}.
\end{equation}
\end{lemma}

\begin{proof}
Choose radii $r_z < \mathrm{dist}(z, \partial\Omega)$ and $r_w < \mathrm{dist}(w, \partial\Omega)$ so that $D(z, r_z), D(w, r_w) \subset \Omega$. By parts \eqref{part:bergmanIneq}, \eqref{part:bergmanComp}, and \eqref{part:bergmanDisk} of Lemma~\ref{lem:BergmanBasicProp}, we estimate:
\[
|B_\Omega(z,w)|^2 \le B_\Omega(z,z) B_\Omega(w,w)
\le B_{D(z, r_z)}(z,z) B_{D(w, r_w)}(w,w)
\le \frac{1}{\pi r_z^2} \cdot \frac{1}{\pi r_w^2}.
\]
Taking square roots yields
\[
|B_\Omega(z,w)| \le \frac{1}{\pi r_z r_w}.
\]
Letting $r_z \nearrow \mathrm{dist}(z, \partial\Omega)$ and $r_w \nearrow \mathrm{dist}(w, \partial\Omega)$ proves the claim.
\end{proof}

\section{Proof of Main Theorem}\label{sec:proofMainThm}
This section is devoted to the proof of Theorem~\ref{main_thm}, and is divided into several parts. Recall that $X = \Gamma \backslash \HH^2$ is a convex cocompact hyperbolic surface, and that we fix a Schottky representation for $\Gamma$ as in Section~\ref{sec:SchottkyGroups}. Throughout, we use the notations introduced in Section~\ref{sec:prelim}.

\subsection{Separation lemma}\label{sec:separation}
We begin with a separation lemma adapted from \cite[Lemma~4.4]{JN}, reflecting the total discontinuity of the limit set $\Lambda$.

\begin{lemma}[Separation Lemma]\label{lem:separation}
There exist constants $\widetilde{C} > 0$ and $h_0 > 0$, depending only on $\Gamma$, such that for all $0 < h < h_0$ and $\tau \ge \widetilde{C} h$, the following holds:

For all $b \in \mathcal{A}$, all words $\Aa, \Bb \in Y(\tau)$ with $\Aa, \Bb \to b$, and all points $z_1, z_2 \in D_b$, we have:
\[
\gamma_{\Aa}(z_1),\, \gamma_{\Bb}(z_2) \text{ lie in the same connected component of } \Omega(h) \quad \Longrightarrow \quad \Aa = \Bb.
\]
\end{lemma}

\begin{proof}
Fix $b \in \mathcal{A}$, $\Aa, \Bb \in Y(\tau)$ with $\Aa, \Bb \to b$, and $z_1, z_2 \in D_b$. Suppose $\Aa \neq \Bb$, yet $\gamma_{\Aa}(z_1)$ and $\gamma_{\Bb}(z_2)$ lie in the same component of $\Omega(h)$. We aim for a contradiction when $\tau \ge \widetilde{C} h$ for some sufficiently large constant $\widetilde{C}$.

By Lemma~\ref{lem:structureLambdah}, each component of $\Omega(h)$ has diameter $\le Ch$, so
\begin{equation}\label{diamH}
|\gamma_{\Aa}(z_1) - \gamma_{\Bb}(z_2)| \le Ch.
\end{equation}

If the first letters of $\Aa$ and $\Bb$ differ, then $\gamma_{\Aa}(z_1)$ and $\gamma_{\Bb}(z_2)$ lie in distinct Schottky disks, hence
\[
|\gamma_{\Aa}(z_1) - \gamma_{\Bb}(z_2)| \ge K,
\]
where $K > 0$ is the minimal distance between two Schottky disks, contradicting \eqref{diamH} for small $h$. Hence, $\Aa$ and $\Bb$ must share a common prefix $\Cc \in \mathcal{W}^\circ$, and we write $\Aa = \Cc \Aa_1$, $\Bb = \Cc \Bb_1$. Assume $\Cc$ is as long as possible, so that the first letters of $\Aa_1$ and $\Bb_1$ differ. Then
\begin{equation}\label{ineq:largerK2}
|\gamma_{\Aa_1}(z_1) - \gamma_{\Bb_1}(z_2)| \ge K.
\end{equation}
Now, using the identity for any Möbius transformation $\gamma$,
\[
|\gamma(u) - \gamma(v)| = |\gamma'(u)|^{1/2} |\gamma'(v)|^{1/2} |u - v|,
\]
applied to $\gamma = \gamma_\Cc$ and $u = \gamma_{\Aa_1}(z_1)$, $v = \gamma_{\Bb_1}(z_2)$, we obtain together with Lemmas~\ref{lem:boundsderivatives} and \ref{lem:YZ}:
\begin{align*}
|\gamma_{\Aa}(z_1) - \gamma_{\Bb}(z_2)|
&= |\gamma_\Cc'(\gamma_{\Aa_1}(z_1))|^{1/2} |\gamma_\Cc'(\gamma_{\Bb_1}(z_2))|^{1/2} |\gamma_{\Aa_1}(z_1) - \gamma_{\Bb_1}(z_2)| \\
&\gg \Upsilon_\Cc \cdot |\gamma_{\Aa_1}(z_1) - \gamma_{\Bb_1}(z_2)| \\
&\gg \Upsilon_\Aa \cdot |\gamma_{\Aa_1}(z_1) - \gamma_{\Bb_1}(z_2)| \\
&\gg \tau \cdot |\gamma_{\Aa_1}(z_1) - \gamma_{\Bb_1}(z_2)|.
\end{align*}
We also used that $\Upsilon_\Cc \gg \Upsilon_\Aa$ for any prefix $\Cc \prec \Aa$. From \eqref{ineq:largerK2}, we conclude
\[
|\gamma_{\Aa}(z_1) - \gamma_{\Bb}(z_2)| \ge C_0 \tau,
\]
for some $C_0 > 0$. Combining this with \eqref{diamH} gives $\tau \le \frac{C}{C_0} h$, which contradicts the assumption $\tau \ge \widetilde{C} h$ if $\widetilde{C} > \frac{C}{C_0}$. Taking $\widetilde{C} = \frac{2C}{C_0}$ completes the proof.
\end{proof}

\subsection{The phase and its derivative}\label{sec:phaseder}
For all words $\Aa, \Bb \in \mathcal{W}^\circ$ and points $z \in D$, we define the \textit{phase} function
\begin{equation}\label{phase}
\Phi_{\Aa, \Bb}(z) := \overline{\mathbb{L}(\gamma_{\Bb}'(z))} -\mathbb{L}(\gamma_{\Aa}'(z)),
\end{equation}
where $\mathbb{L}$ is the complex logarithm as in \eqref{defi:principal_branch}. For all $s = \sigma + i t \in \CC$, this yields
\[
\gamma_{\Aa}'(z)^s \cdot \overline{\gamma_{\Bb}'(z)^s}
= \gamma_{\Aa}'(z)^{\sigma} \cdot \overline{\gamma_{\Bb}'(z)}^{\sigma} \cdot e^{-i t \Phi_{\Aa, \Bb}(z)}.
\]
For a word $\textbf{a}$ in $\mathcal{A} = \{1, \dots, 2m\}$, let $\mathsf{red}(\textbf{a}) \in \mathcal{W}$ be its reduced form, obtained by applying the rule $a\overline{a} = \emptyset$ for all $a\in \mathcal{A}$. Recall the definition of $\Upsilon_{\Aa}$ from \eqref{defi:Upsilon}, and define
\begin{equation}\label{Dab}
\mathcal{D}_{\Aa, \Bb} := \left( \frac{\Upsilon_{\Aa} \Upsilon_{\Bb}}{\Upsilon_{\mathsf{red}(\Aa \overline{\Bb})}} \right)^{1/2}.
\end{equation}
Consider the following cases:
\begin{itemize}
\item If $\Aa \to \overline{\Bb}$, then $\mathsf{red}(\Aa \overline{\Bb}) = \Aa \overline{\Bb}$ and $\Upsilon_{\Aa \overline{\Bb}} \asymp \Upsilon_{\Aa} \Upsilon_{\Bb}$, so $\mathcal{D}_{\Aa, \Bb} \asymp 1$.
\item If alternativerly $\Aa \not\to \overline{\Bb}$, then $\Aa \overline{\Bb}$ is no longer reduced, and $\mathcal{D}_{\Aa, \Bb}$ may be very small.
\item In the extreme case $\Aa = \Bb$, we have $\mathsf{red}(\Aa \overline{\Bb}) = \emptyset$ and $\Upsilon_{\emptyset} = 1$, so $\mathcal{D}_{\Aa, \Bb} \asymp \Upsilon_\Aa$.
\item In any case, we have
\begin{equation}\label{eq:uniformDab}
\mathcal{D}_{\Aa, \Bb} = O(1),
\end{equation}
uniformly in $\Aa, \Bb$.
\end{itemize}

We now prove the key estimate:

\begin{prop}[Phase derivatives]\label{prop:phaseDerivative}
Let $b \in \mathcal{A}$ and $\Aa, \Bb \in \mathcal{W}^\circ$ with $\Aa, \Bb \to b$ and $\Aa \ne \Bb$. Then
\[
\inf_{x \in I_b} |\Phi_{\Aa, \Bb}'(x)| \asymp_\Gamma \mathcal{D}_{\Aa, \Bb}.
\]
\end{prop}

For the remainder of this section, we write for each word $\Aa$:
$$
\gamma_{\Aa} = \pmat{a_{\Aa}}{b_{\Aa}}{c_{\Aa}}{d_{\Aa}}\in \mathrm{SL}_2(\R).
$$
We need the following simple but crucial observation:

\begin{lemma}\label{lem:matrixEntriesBound}
For all words $\Aa$ in $\mathcal{A}$,
\[
|c_{\Aa}| \asymp_\Gamma \Upsilon_{\mathsf{red}(\Aa)}^{-1/2}.
\]
\end{lemma}

\begin{proof}
Since $\gamma_{\Aa} = \gamma_{\mathsf{red}(\Aa)}$, we may assume $\Aa$ is reduced. For $b \in \mathcal{A}$ with $\Aa \to b$ and $z \in D_b$,
\[
\gamma_{\Aa}'(z) = \frac{1}{(c_{\Aa} z + d_{\Aa})^2}
= \frac{1}{c_{\Aa}^2 (z - x_{\Aa})^2}, \quad \text{where } x_{\Aa} = -\frac{d_{\Aa}}{c_{\Aa}}.
\]
As in the proof of Lemma~\ref{lem:taylor}, we have $|z - x_{\Aa}| \asymp_\Gamma 1$, so $|\gamma_{\Aa}'(z)| \asymp_\Gamma \frac{1}{c_{\Aa}^2}$. The claim now follows from Lemma~\ref{lem:boundsderivatives}.
\end{proof}

\begin{proof}[Proof of Proposition~\ref{prop:phaseDerivative}]
Fix $b \in \mathcal{A}$, $x \in I_b$, and distinct $\Aa, \Bb \in \mathcal{W}^\circ$ with $\Aa, \Bb \to b$. Direct calculation yields
\begin{equation}\label{firstDer}
\Phi_{\Aa, \Bb}'(x) = 2\left( \frac{c_{\Bb}}{c_{\Bb} x + d_{\Bb}} - \frac{c_{\Aa}}{c_{\Aa} x + d_{\Aa}}  \right)
= 2 \cdot \frac{c_{\Bb} d_{\Aa} - c_{\Aa} d_{\Bb}}{(c_{\Aa} x + d_{\Aa})(c_{\Bb} x + d_{\Bb})}.
\end{equation}
Therefore,
\[
|\Phi_{\Aa, \Bb}'(x)| = 2 |c_{\Aa} d_{\Bb} - c_{\Bb} d_{\Aa}| \cdot |\gamma_{\Aa}'(x)|^{1/2} |\gamma_{\Bb}'(x)|^{1/2}.
\]
By Lemma~\ref{lem:boundsderivatives},
\[
|\Phi_{\Aa, \Bb}'(x)| \asymp_\Gamma |c_{\Aa} d_{\Bb} - c_{\Bb} d_{\Aa}| \cdot \Upsilon_{\Aa}^{1/2} \Upsilon_{\Bb}^{1/2}.
\]
Note that
\[
\gamma_{\Aa \overline{\Bb}} = \gamma_{\Aa} \gamma_{\Bb}^{-1} =
\begin{pmatrix}
\ast & \ast \\
c_{\Aa} d_{\Bb} - c_{\Bb} d_{\Aa} & \ast
\end{pmatrix},
\]
so by Lemma~\ref{lem:matrixEntriesBound},
\[
|c_{\Aa} d_{\Bb} - c_{\Bb} d_{\Aa}| \asymp_\Gamma \Upsilon_{\mathsf{red}(\Aa \overline{\Bb})}^{-1/2}.
\]
Therefore,
\[
|\Phi_{\Aa, \Bb}'(x)| \asymp_\Gamma \frac{\Upsilon_{\Aa}^{1/2} \Upsilon_{\Bb}^{1/2}}{\Upsilon_{\mathsf{red}(\Aa \overline{\Bb})}^{1/2}} = \mathcal{D}_{\Aa, \Bb},
\]
as claimed.
\end{proof}

\subsection{Averaged oscillatory integrals}\label{sec:averoscint}
Let $\varphi \in C^\infty(\mathbb{R})$ be a non-negative bump function with $\mathrm{supp}(\varphi) = [-2, 2]$ and $\varphi \equiv 1$ on $[-1, 1]$. Define
\[
\varphi_{T,H}(t) := \frac{1}{H}\, \varphi\left( \frac{t - T}{H} \right).
\]
Define the Fourier transform as usual by
\[
\widehat{\varphi}(\xi) := \int_{-\infty}^\infty \varphi(t)\, e^{-it\xi}\, dt.
\]
The aim of this section is to establish

\begin{prop}[Key bound for averaged oscillatory integrals]\label{prop:FOURIER} Let $\varphi_{T,H}$ be as above. There exists $T_0 = T_0(\Gamma) > 0$ such that for all $T \ge T_0$, $\eta > 0$, $T^\eta \le H \le T$, $b \in \mathcal{A}$, and $\Aa, \Bb \in \mathcal{W}^\circ$ with $\Aa, \Bb \to b$, all measurable $f \colon D \to \CC$, and all $\epsilon > 0$, the following holds with $h := \frac{1}{T}$:
\[
\left\vert \int_{\Omega_b(h)} \widehat{\varphi_{T,H}}\left(\Phi_{\Aa,\Bb}(z)\right) f(z)\, \dvol(z) \right\vert \ll_{\epsilon,\eta,\Gamma} 
\begin{cases}
h^{-\delta + 2} \Vert f \Vert_{\infty, \Omega_b(h)} & \text{if } \Aa = \Bb, \\[0.5em]
h^{-\delta + 2} \mathcal{D}_{\Aa, \Bb}^{-\delta} H^{-\delta + \epsilon} \Vert f \Vert_{\infty, \Omega_b(h)} & \text{if } \Aa \neq \Bb.
\end{cases}
\]
Here, $\mathcal{D}_{\Aa, \Bb}$ is defined as in \eqref{Dab}, and
\[
\Vert f \Vert_{\infty, \Omega_b(h)} := \sup_{z \in \Omega_b(h)} \vert f(z) \vert.
\]
\end{prop}

Recall that $\Lambda(h)$ is the real $h$-neighbourhood of the limit set as defined in \eqref{defi:realhneighbourhood}. Before proving Proposition \ref{prop:FOURIER}, we need a bound on the size of $\Lambda(h)$ intersected with small intervals:

\begin{lemma}\label{lem:boundInSmallIntervals}
There exists a constant $C > 0$, depending only on $\Gamma$, such that for every $x_0 \in I = \bigsqcup_{a \in \mathcal{A}} I_a$ and all $\nu, h > 0$, we have
\[
|\Lambda(h) \cap [x_0 - \nu, x_0 + \nu]| \le C h \left(1 + \frac{\nu}{h} \right)^\delta,
\]
where $\vert \cdot\vert$ denotes the Lebesgue measure.
\end{lemma}

\begin{proof}
For each $p \in I$, define $I(p; h) := [p - h, p + h]$. The set $\Lambda(h) \cap I(x_0;\nu) $ consists of all points $x\in I(x_0;\nu)$ for which there exists some $p\in \Lambda$ such that $\vert x-p\vert \leqslant h$. In this case, the triangle inequality implies that $p\in I(x_0; \nu + h).$ It follows that family of intervals $\{ I(p; h) : p \in \Lambda \cap I(x_0;  \nu + h) \}$ covers $\Lambda(h) \cap I(x_0; \nu)$. Observe that each of these intervals satisfies
\begin{equation}\label{eq:intervalContainment}
I(p; h) \subset I(x_0; \nu + 2h).
\end{equation}
By compactness, there exists a finite subcover $\{I(p_j; h)\}_{j=1}^k$. Applying the basic covering lemma yields a disjoint subcollection $\{I(q_j; h)\}_{j=1}^m$ of these intervals such that the enlarged intervals $\{I(q_j; 3h)\}_{j=1}^m$ cover $\Lambda(h) \cap I(x_0; \nu)$.

Let $\mu$ be the Patterson–Sullivan measure associated with $\Gamma$ (see \cite[Chapter~14]{Borthwick_book}). Since the intervals $I(q_j; h)$ are disjoint and each is contained in $I(x_0; \nu + 2h)$ by \eqref{eq:intervalContainment}, we have
\begin{equation}\label{eq:disjointness}
\sum_{j=1}^m \mu(I(q_j; h)) \le \mu(I(x_0; \nu + 2h)).
\end{equation}
By \cite[Lemma~14.13]{Borthwick_book}, for all $p \in \Lambda$,
\begin{equation}\label{eq:measGrowth}
\mu(I(p; h)) \asymp h^\delta,
\end{equation}
uniformly in $p$. Furthermore, since $\mu(I(x_0; \nu + 2h)) > 0$, the interval $I(x_0; \nu + 2h)$ intersects $\Lambda$, so there exists $p' \in \Lambda$ with $I(x_0; \nu + 2h) \subset I(p'; 2(\nu + 2h))$, giving
\begin{equation}\label{eq:measGrowth2}
\mu(I(x_0; \nu + 2h)) \le \mu( I(p'; 2(\nu + 2h))) \ll (\nu + h)^\delta.
\end{equation}
Combining \eqref{eq:measGrowth}, \eqref{eq:measGrowth2}, and \eqref{eq:disjointness}, the number of disjoint intervals is bounded by
\[
m \ll \frac{(\nu + h)^\delta}{h^\delta} = \left(1 + \frac{\nu}{h}\right)^\delta.
\]
Since the intervals $\{I(q_j; 3h)\}_{j=1}^m$ cover $\Lambda(h) \cap I(x_0; \nu)$, we conclude
\[
|\Lambda(h) \cap I(x_0; \nu)| \le \sum_{j=1}^m |I(q_j; 3h)| = 3hm \ll h \left(1 + \frac{\nu}{h} \right)^\delta,
\]
as claimed.
\end{proof}

We are now ready to give the

\begin{proof}[Proof of Proposition~\ref{prop:FOURIER}]
If $\Aa = \Bb$, then $\Phi_{\Aa,\Bb}(z) = 0$, and the bound follows directly from the triangle inequality and Lemma~\ref{lem:vol}. We now assume $\Aa \neq \Bb$.

By the scaling and translation properties of the Fourier transform, we have
\begin{equation}\label{eq:scaling}
\widehat{\varphi_{T,H}}(\xi) = e^{-i\xi T} \widehat{\varphi}(H\xi).
\end{equation}
Since $\varphi$ is smooth and supported in $[-2,2]$, repeated integration yields for all $m \in \mathbb{N}$ and $\xi \in \CC \setminus \{0\}$,
\[
|\widehat{\varphi}(\xi)| \ll_m \frac{e^{2 |\mathrm{Im}(\xi)|}}{|\xi|^m},
\]
which combined with \eqref{eq:scaling} gives
\begin{equation}\label{eq:fourierDecay}
|\widehat{\varphi_{T,H}}(\xi)| \ll_m \frac{e^{2(H+T)|\mathrm{Im}(\xi)|}}{(H |\xi|)^m}. 
\end{equation}
Let $z = x + iy \in \Omega_b(h)$. By Lemma~\ref{lem:taylor},
\[
\mathbb{L}(\gamma_{\Aa}'(z)) = \log(\gamma_{\Aa}'(x)) + O(h),
\]
with implied constant depending only on $\Gamma$, and similarly for $\Bb$. Hence,
\begin{equation}\label{eq:phiBound}
\Phi_{\Aa,\Bb}(z) = \Phi_{\Aa,\Bb}(x) + O(h).
\end{equation}
In particular, since $\log(\gamma_{\Aa}'(x))$ is real,
\begin{equation}\label{eq:imaginaryBound}
|\mathrm{Im}(\Phi_{\Aa,\Bb}(z))| = O(h). 
\end{equation}
Fix $\epsilon > 0$. Suppose first we have $|\Phi_{\Aa,\Bb}(z)| \ge H^{-1+\epsilon}$ for all $z \in \Omega_b(h)$. Then applying \eqref{eq:fourierDecay} together with \eqref{eq:imaginaryBound}, the conditions $T^\eta \le H \le T$ and $h= T^{-1}$, as well as Lemma~\ref{lem:vol}, we obtain
\begin{align*}
\left| \int_{\Omega_b(h)} \widehat{\varphi_{T,H}}(\Phi_{\Aa,\Bb}(z)) f(z)\, \dvol(z) \right| &\ll_m \frac{\vol(\Omega(h)) e^{O(h(T+H))}}{H^{\epsilon m}} \Vert f \Vert_{\infty, \Omega_b(h)}\\
&\ll T^{-2+\delta - \eta \epsilon m} \Vert f \Vert_{\infty, \Omega_b(h)}.
\end{align*}
Choosing $m$ large enough in terms of $\eta$ and $\epsilon$ implies the integral is $O_{A,\eta,\epsilon}(T^{-A} \Vert f \Vert_{\infty, \Omega_b(h)})$ for all $A > 0$.

Now suppose instead that there exists $z_0 = x_0 + i y_0 \in \Omega_b(h)$ with $|\Phi_{\Aa,\Bb}(z_0)| < H^{-1+\epsilon}$. From \eqref{eq:phiBound} we get for all $z = x + iy \in \Omega_b(h)$,
\[
|\Phi_{\Aa,\Bb}(z) - \Phi_{\Aa,\Bb}(z_0)| \ge |\Phi_{\Aa,\Bb}(x) - \Phi_{\Aa,\Bb}(x_0)| - C h.
\]
for some $C>0$ depending only on $\Gamma$. By Proposition~\ref{prop:phaseDerivative},
\[
\inf_{I_b} |\Phi_{\Aa,\Bb}'| \asymp_\Gamma \mathcal{D}_{\Aa,\Bb}.
\]
Thus, applying the mean value theorem and noticing that $x, x_0 \in I_b$, yields
\[
|\Phi_{\Aa,\Bb}(z) - \Phi_{\Aa,\Bb}(z_0)| \ge c \mathcal{D}_{\Aa,\Bb} |x - x_0| - C h,
\]
for some constant $c > 0$ depending only on $\Gamma$. Clearly, this implies
\[
|\Phi_{\Aa,\Bb}(z)| \ge c \mathcal{D}_{\Aa,\Bb} |x - x_0| - C h - H^{-1+\epsilon},
\]
Since $h = T^{-1} \le H^{-1}$, we now deduce that if
\[
|x - x_0| > c^{-1} (C + 2) \mathcal{D}_{\Aa,\Bb}^{-1} H^{-1+\epsilon} =: \nu,
\]
then
\[
|\Phi_{\Aa,\Bb}(z)| \ge H^{-1+\epsilon}.
\]
In view of this, we split
\[
\Omega_b(h) = \Omega_b^{(1)}(h) \sqcup \Omega_b^{(2)}(h),
\]
where
\begin{align*}
\Omega_b^{(1)}(h) &:= \{ z = x + iy \in \Omega_b(h) : |x - x_0| \le \nu \}, \\
\Omega_b^{(2)}(h) &:= \{ z = x + iy \in \Omega_b(h) : |x - x_0| > \nu \}.
\end{align*}
By construction, $z\in \Omega_b^{(2)}(h) $ implies $|\Phi_{\Aa,\Bb}(z)| \ge H^{-1+\epsilon}$. Hence, using \eqref{eq:fourierDecay}, Lemma~\ref{lem:vol}, and recalling that $T^\eta \le H \le T$ and $h= T^{-1}$, we can estimate
\begin{align}
\int_{\Omega_b(h)} \widehat{\varphi_{T,H}}(\Phi_{\Aa,\Bb}(z)) f(z)\, \dvol(z)
&= \int_{\Omega_b^{(1)}(h)} \cdots + \int_{\Omega_b^{(2)}(h)} \cdots \\
&\ll_m \vol(\Omega_b^{(1)}(h)) \|f\|_{\infty,\Omega_b(h)}
+ \frac{\vol(\Omega(h)) \|f\|_{\infty,\Omega_b(h)}}{H^{\epsilon m}} \\
&\ll \left( \vol(\Omega_b^{(1)}(h)) + T^{-2+\delta - \eta \epsilon m} \right) \|f\|_{\infty,\Omega_b(h)}. \label{eq:integralSplit}
\end{align}
It remains to bound the area of $\Omega_b^{(1)}(h)$. By Lemma~\ref{lem:structureLambdah}, each connected component of $\Omega_b(h)$ has diameter at most $C h$, so
\[
\Omega_b(h) \subseteq \Lambda(h) + i[-Ch, Ch]
\]
and therefore,
\[
\Omega_b^{(1)}(h) \subseteq (\Lambda(h) \cap [x_0 - \nu, x_0 + \nu]) + i[-Ch, Ch].
\]
From the bound in \eqref{eq:uniformDab} and $h = T^{-1} \le H^{-1}$, it follows that $\nu \gg h$. Thus, Lemma~\ref{lem:boundInSmallIntervals} gives
\begin{align*}
\vol(\Omega_b^{(1)}(h)) 
&\le 2Ch \cdot |\Lambda(h) \cap [x_0 - \nu, x_0 + \nu]| \\
&\ll h^2 \left(1 + \frac{\nu}{h}\right)^\delta \\
&\ll h^{-\delta + 2} \nu^\delta \\
&\ll h^{-\delta + 2} \mathcal{D}_{\Aa,\Bb}^{-\delta} H^{-\delta(1 - \epsilon)}.
\end{align*}
Substituting this into \eqref{eq:integralSplit} and choosing $m$ sufficiently large completes the proof.
\end{proof}

\subsection{Applying Jensen's Formula}\label{sec:jensen}
Recall the definition of the $\tau$-refined transfer operators $\mathcal{L}_{\tau, s}$ from \eqref{defiRefinedTO} and the refined domain $\Omega(h)$ from \eqref{defi:omega(h)}. If $h > 0$ and $\tau > 0$ are sufficiently small in terms of the Schottky data of $\Gamma$, then $\mathcal{L}_{\tau, s} \colon H^2(\Omega(h)) \to H^2(\Omega(h))$ is well-defined and trace class. We consider the concatenated operator
\begin{equation}\label{defConcat}
\mathcal{L}_{\tau_0, \tau_1, s} := \mathcal{L}_{\tau_0, s} \mathcal{L}_{\tau_1, s} \colon H^2(\Omega(h)) \to H^2(\Omega(h)).
\end{equation}

The Hilbert–Schmidt norm of a trace class operator $A \colon H \to H$ on a separable Hilbert space $H$ is given by
\[
\|A\|_{\mathrm{HS}}^2 := \mathrm{tr}(A^\ast A),
\]
where $A^\ast$ denotes the adjoint of $A$.

Recall the resonance counting function from \eqref{defi:NXTH}:
\[
N_X(\sigma, T, H) := \#\left\{ s \in \mathcal{R}_X : \mathrm{Re}(s) \ge \sigma,\ \mathrm{Im}(s) \in [T-H, T+H] \right\},
\]
where resonances are counted with multiplicities.

\begin{prop}[Resonance counting bound via HS-norm]\label{prop:intHSzeros}
There exist positive constants $\alpha$, $\beta$, $\epsilon_0$, $K_0$, $T_0$, and $C$, depending only on $\Gamma$, such that for all $T > T_0$, $1 \le H \le T$, $K > K_0$, and $h, \tau_0, \tau_1 \in (0, \epsilon_0)$, we have
\[
N_X(\sigma, T, H) \le C K^2 \max_{\substack{ \sigma - \frac{\alpha}{K} \le \mathrm{Re}(s) \le \beta K \\ |\mathrm{Im}(s)| \le \beta K }} \left( \int_{T - H}^{T + H} \| \mathcal{L}_{\tau_0, \tau_1, s + it} \|_{\mathrm{HS}, h}^2 dt \right) + K (C \tau_0 \tau_1)^{2K} T,
\]
where $\|\cdot\|_{\mathrm{HS}, h}$ denotes the Hilbert--Schmidt norm on $H^2(\Omega(h))$.
\end{prop}

This proposition follows from the following variant of Jensen's formula that we specifically tailored to our purposes:

\begin{lemma}[Adapted Jensen’s formula]\label{lem:jensenAdapt}
Let $\sigma \in \mathbb{R}$ with $\sigma < \delta$, and let $f$ be an entire function. Define
\[
N_f(\sigma, T, H) := \#\left\{ s \in \CC : f(s) = 0,\ \sigma \le \mathrm{Re}(s) \le \delta,\ \mathrm{Im}(s) \in [T - H, T + H] \right\}.
\]
Then for all $K$ sufficiently large,
\[
N_f(\sigma, T, H) \ll K^2 \left( \max_{\substack{ \sigma - \frac{\alpha}{K} \le \mathrm{Re}(s) \le \beta K \\ |\mathrm{Im}(s)| \le \beta K }} \int_{T - H}^{T + H} \log |f(s + it)| dt - \int_{T - H}^{T + H} \log |f(\delta + K + it)| dt \right),
\]
where the implied constants as well as $\alpha, \beta > 0$ are independent of $f, \sigma$, and $K$.
\end{lemma}

\begin{proof}
Fix $t \in \mathbb{R}$ and let $D_1 = D_\CC(s_0, r_1)$ and $D_2 = D_\CC(s_0, r_2)$ be concentric disks centered at $s_0 = \sigma_0 + it$ with radii $r_2 > r_1 > 0$, chosen such that
\begin{equation}\label{concentric_jensen}
\left\{ s \in \CC : \sigma \le \mathrm{Re}(s) \le \delta,\ |\mathrm{Im}(s) - t| \le 1 \right\} \subset \overline{D_1} \subset D_2.
\end{equation}
Define the zero counting function
\[
M_f(\sigma, t) := \#\left\{ s \in \CC : f(s) = 0,\ \sigma \le \mathrm{Re}(s) \le \delta,\ |\mathrm{Im}(s) - t| \le 1 \right\},
\]
where zeros are counted with multiplicities. By classical Jensen's formula (see for instance \cite{Titchmarsh}),
\[
M_f(\sigma, t) \le \frac{1}{\log(r_2 / r_1)} \left( \int_0^1 \log |f(\sigma_0 + r_2 e^{2\pi i \theta} + it)| d\theta - \log |f(\sigma_0 + it)| \right).
\]
Integrating this over $t \in [T - H, T + H]$ gives
\begin{align*}
N_f(\sigma, T, H) &\le \int_{T - H}^{T + H} M_f(\sigma, t)\, dt \\
&\le \frac{1}{\log(r_2 / r_1)} \left( \int_0^1 \int_{T - H}^{T + H} \log |f(\sigma_0 + r_2 e^{2\pi i \theta} + it)| dt\, d\theta - \int_{T - H}^{T + H} \log |f(\sigma_0 + it)| dt \right).
\end{align*}
Now let $K\gg 1 $ and choose parameters
\[
\sigma_0 := \delta + K, \quad r_1 := \sqrt{(\sigma_0-\sigma)^2 + 1}, \quad r_2 := r_1 + \frac{1}{K}.
\]
These choices guarantee that \eqref{concentric_jensen} holds true. For large $K$, we have
\begin{itemize}
\item $r_1 \asymp r_2 \asymp \sigma_0 - \sigma \asymp K$;
\item $\log(r_2 / r_1)^{-1} \ll K^2$;
\item $r_1 = \sigma_0 - \sigma + O(\frac{1}{K}) $ and $r_2 = \sigma_0 - \sigma + O(\frac{1}{K}) $;
\item $\sigma - O(1/K) \le\mathrm{Re}(\sigma_0 + r_2 e^{2\pi i \theta}) \le O(K)$ for all $\theta\in [0,2\pi]$;
\item $|\mathrm{Im}(\sigma_0 + r_2 e^{2\pi i \theta})| \le \sigma_0 + r_2 = O(K)$ for all $\theta\in [0,2\pi]$.
\end{itemize}
Thus,
\[
\int_0^1 \int_{T - H}^{T + H} \log |f(\sigma_0 + r_2 e^{2\pi i \theta} + it)| dt\, d\theta
\le \max_{\substack{ \sigma - O(1/K) \le \mathrm{Re}(s) \le O(K) \\ |\mathrm{Im}(s)| \le O(K) }} \int_{T - H}^{T + H} \log |f(s + it)| dt.
\]
Combining all estimates yields the stated bound.
\end{proof}

We also require the following:

\begin{lemma}[Pointwise estimate in $\mathrm{Re}(s) > \delta$]\label{lem:pointwiseEst} For all sufficiently small resolution parameters $\tau_0 > 0 $ and $\tau_1  >0 $, and for all $s = \sigma +it\in \CC$ with $\sigma > \delta$ the Fredholm determinant of $ \mathcal{L}_{\tau_0, \tau_1, s}^2 $ satisfies
\begin{equation*}
- \log \vert \det\left( 1-\mathcal{L}_{\tau_0, \tau_1, s}^2  \right)\vert \le  \frac{(C \tau_0 \tau_1)^{2(\sigma-\delta)}}{ 1- (C \tau_0 \tau_1)^{2(\sigma-\delta)}},
\end{equation*}
where $C>0$ depends only on $\Gamma.$
\end{lemma}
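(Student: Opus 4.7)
The plan is to bound $-\log|\det(1-\mathcal{L}_{\tau_0,\tau_1,s}^2)|$ via the trace expansion of the Fredholm determinant, reducing matters to a geometric bound on the traces of powers. For a trace-class operator $A$ of spectral radius $<1$, the standard identity $\log\det(1-A) = -\sum_{n=1}^\infty \tr(A^n)/n$ yields
$$-\log|\det(1-A)| \leqslant \sum_{n=1}^\infty \frac{|\tr(A^n)|}{n},$$
and I apply this with $A = \mathcal{L}_{\tau_0,\tau_1,s}^2$. The spectral-radius hypothesis will follow \emph{a posteriori} from the estimate on traces, as long as $\tau_0, \tau_1$ are small enough.

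Next, I would compute each $\tr(\mathcal{L}_{\tau_0,\tau_1,s}^{2n})$ explicitly. Expanding using \eqref{defi:refinedTO}, $\mathcal{L}_{\tau_0,\tau_1,s}^{2n}$ is a weighted sum of composition operators indexed by $2n$-tuples $(\mathbf{a}_1,\mathbf{b}_1,\dots,\mathbf{a}_{2n},\mathbf{b}_{2n}) \in (Y(\tau_0)\times Y(\tau_1))^{2n}$ whose concatenation $W = \mathbf{a}_1\mathbf{b}_1\cdots\mathbf{a}_{2n}\mathbf{b}_{2n}$ is a reduced word and such that $\gamma_W$ has a (unique, attracting) fixed point $x_W$ in the relevant Schottky disk. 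The Lefschetz fixed-point formula (see \cite[Lemma~15.9]{Borthwick_book}) then gives the explicit expression
$$\tr(\mathcal{L}_{\tau_0,\tau_1,s}^{2n}) = \sum_W \frac{\gamma_W'(x_W)^s}{1 - \gamma_W'(x_W)}.$$

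The bound on each trace then follows from the distortion estimates of §\ref{sec:boundsForDeriv}. Since $\gamma_W'(x_W)\in(0,1)$, one has $|\gamma_W'(x_W)^s| = \Upsilon_W^\sigma$ and $|1-\gamma_W'(x_W)|\geqslant \tfrac12$ whenever $\tau_0\tau_1$ is small. Part \eqref{part:almostMultip} of Lemma \ref{lem:basicDistEst} combined with Lemma \ref{lem:YZ} yields $\Upsilon_W \leqslant (C_0\tau_0\tau_1)^{2n}$ for some $C_0 = C_0(\Gamma)$. Splitting $\Upsilon_W^\sigma = \Upsilon_W^\delta \cdot \Upsilon_W^{\sigma-\delta}$, pulling out the uniform bound $\Upsilon_W^{\sigma-\delta} \leqslant (C_0\tau_0\tau_1)^{2n(\sigma-\delta)}$, and using the critical-exponent identity $\sum_{\mathbf{a}\in Y(\tau)} \Upsilon_\mathbf{a}^\delta \asymp 1$ (itself a consequence of Lemma \ref{lem:YZ}) iterated $2n$ times to control the remaining sum over tuples, I arrive at
$$|\tr(\mathcal{L}_{\tau_0,\tau_1,s}^{2n})| \leqslant (C\tau_0\tau_1)^{2n(\sigma-\delta)}$$
for a constant $C = C(\Gamma)$. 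Summing the resulting geometric series (and using $\sum_{n\geqslant 1} r^n/n \leqslant r/(1-r)$) then delivers the claimed bound.

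The main technical obstacle I foresee is the combinatorial bookkeeping in the Lefschetz expansion: one must identify precisely which $2n$-tuples contribute non-trivially (checking that $W$ is reduced and that $\gamma_W$ has its fixed point in the correct Schottky disk), and one must propagate the multiplicative constants from the almost-multiplicativity and distortion lemmas cleanly through the $2n$-fold concatenation so as not to contaminate the clean exponent $2n(\sigma-\delta)$ in the final estimate. Once this setup is in place, the remaining work is a routine geometric-series manipulation.
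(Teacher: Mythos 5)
Your proposal is correct and follows essentially the same route as the paper: expand $-\log|\det(1-A)|$ via the trace series, express $\tr(\mathcal{L}_{\tau_0,\tau_1,s}^{2n})$ through the Lefschetz fixed-point formula, bound $\gamma_W'(x_W)$ by $(C\tau_0\tau_1)^{2n}$ using the almost-multiplicativity and the $\Upsilon_{\mathbf{a}}\asymp\tau$ estimate, and sum a geometric series. The only (cosmetic) deviation is in step 4: you split $\Upsilon_W^\sigma = \Upsilon_W^\delta\Upsilon_W^{\sigma-\delta}$ and invoke the critical-exponent sum $\sum_{\mathbf{a}\in Y(\tau)}\Upsilon_\mathbf{a}^\delta\asymp 1$, whereas the paper bounds the sum by $|S|^n\cdot\sup_W\Upsilon_W^\sigma$ using $|S|\ll(\tau_0\tau_1)^{-\delta}$; by Lemma~\ref{lem:YZ} these give the same exponent.
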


\begin{proof}
We will adapt the argument of Magee--Naud \cite{NaudMagee}. Let $H$ be a separable Hilbert space and $A\colon H\to H$ a trace class operator with operator norm $\Vert A\Vert_H < 1$. Then the Fredholm determinant of $A$ can be expressed by the absolutely convergent series
\begin{equation}
\det(1-A) = \exp\left( -\sum_{n=1}^\infty \frac{1}{n} \mathrm{tr}(A^n) \right),
\end{equation}
see for instance \cite{Gohberg_Goldberg_Krupnik}. Taking absolute values and logarithms on both sides gives
\begin{equation}
- \log \vert \det(1-A)\vert =  \sum_{n=1}^\infty \frac{1}{k} \mathrm{Re}(\mathrm{tr}(A^n)) \le \sum_{n=1}^\infty \frac{1}{k} \vert \mathrm{tr}(A^n) \vert.
\end{equation}
Applying this to $A=\mathcal{L}_{\tau_0, \tau_1, s}^2$ with $\sigma = \mathrm{Re}(s)>\delta$ yields
\begin{equation}\label{ineq_exp}
- \log \vert \det( 1- \mathcal{L}_{\tau_0, \tau_1, s}^2)\vert \le  \sum_{n=1}^\infty \frac{1}{k} \vert \mathrm{tr}(\mathcal{L}_{\tau_0, \tau_1, s}^{2n})\vert.
\end{equation}
Hence, we need a suitable upper bound for the trace of $\mathcal{L}_{\tau_0, \tau_1, s}^{2n}$. By \eqref{defiRefinedTO} we can write
\begin{equation}
\mathcal{L}_{\tau_0, \tau_1, s} f(z) = \sum_{\Bb\in S_b} \gamma'_{\Bb}(z)^s  f(\gamma_{\Bb}(z)) \quad \text{for}\quad z\in \Omega_b(h),
\end{equation}
where, for each $b\in \mathcal{A}$, we define
$$
S_b = \{ \Aa_0\Aa_1 \in Y(\tau_0) \times Y(\tau_1): \Aa_0 \to \Aa_1 \to b  \}.
$$
Here $\Aa_0\Aa_1 \in Y(\tau_0) \times Y(\tau_1)$ means that for $i\in \{0,1\}$ the sub-word $\Aa_i$ belongs to $Y(\tau_i)$. Defining $S = S_1 \sqcup \cdots \sqcup S_b $ and carefully applying the Lefschetz fixed point formula (see \cite[Lemma 15.9]{Borthwick_book}), we deduce that for every $n\in \N$,
\begin{equation}\label{traceformulaS}
\mathrm{tr}(\mathcal{L}_{\tau_0, \tau_1, s}^{n}) = \sum_{
\substack{
\Bb_n \to \Bb_1 \to \dots \to \Bb_n \\
\Bb_1, \dots, \Bb_n \in S
}}   \frac{\gamma'_{ \Bb_1 \Bb_2 \cdots \Bb_n} ( x_{\Bb_1 \Bb_2 \cdots \Bb_n} )^s }{1 - \gamma'_{\Bb_1 \Bb_2 \cdots \Bb_n} ( x_{\Bb_1 \Bb_2 \cdots \Bb_n} ) },
\end{equation}
where $x_{\Bb_1 \Bb_2 \cdots \Bb_n}$ is the unique attracting fixed point of $\gamma_{\Bb_1 \Bb_2 \cdots \Bb_n}$. Note that $x_{\Bb_1 \Bb_2 \cdots \Bb_n} \in D_b$ where $b$ is the first letter of $\Bb_1$ and the last letter of $\Bb_n$. Thus, applying Part \eqref{part:almostMultip} of Lemma \ref{lem:boundsderivatives} $(n-1)$ times, we obtain for some $C=C(\Gamma)>0$,
$$
\gamma'_{ \Bb_1 \Bb_2 \cdots \Bb_n} ( x_{\Bb_1 \Bb_2 \cdots \Bb_n} ) \ll \Upsilon_{\Bb_1 \Bb_2 \cdots \Bb_n} \le  C^{n} \Upsilon_{\Bb_1} \cdots \Upsilon_{\Bb_n}.
$$
By the definition of $S$ it follows that for all $\Bb\in S$,
$$
\Upsilon_{\Bb} \ll \tau_0 \tau_1.
$$
Thus (increasing $C$ if necessary) we have
$$
\gamma'_{ \Bb_1 \Bb_2 \cdots \Bb_n} ( x_{\Bb_1 \Bb_2 \cdots \Bb_n} ) \le  (C\tau_0 \tau_1 )^n.
$$
Now if $\tau_0$ and $\tau_1$ are chosen so small that $C\tau_0 \tau_1 < \frac{1}{2}$, say, we obtain
$$
\frac{\gamma'_{ \Bb_1 \Bb_2 \cdots \Bb_n} ( x_{\Bb_1 \Bb_2 \cdots \Bb_n} )^\sigma }{1 - \gamma'_{\Bb_1 \Bb_2 \cdots \Bb_n} ( x_{\Bb_1 \Bb_2 \cdots \Bb_n} ) } \ll \gamma'_{ \Bb_1 \Bb_2 \cdots \Bb_n} ( x_{\Bb_1 \Bb_2 \cdots \Bb_n} )^\sigma \ll (C\tau_0 \tau_1 )^{\sigma n}.
$$
Therefore, going back to \eqref{traceformulaS}, we obtain
\begin{equation}\label{trace_bound_0}
\mathrm{tr}(\mathcal{L}_{\tau_0, \tau_1, s}^{n}) \ll \vert S\vert^n (C \tau_0 \tau_1)^{\sigma n}.
\end{equation}
By Lemma \ref{lem:YZ} the cardinality of $S$ is bounded by
\begin{equation}
\vert S\vert \ll \vert Y(\tau_0)\vert \vert Y(\tau_1)\vert \ll ( \tau_0 \tau_1 )^{-\delta},
\end{equation}
which when inserted into \eqref{trace_bound_0} yields (possibly with a larger constant $C$)
$$
\mathrm{tr}(\mathcal{L}_{\tau_0, \tau_1, s}^{n}) \ll (C \tau_0 \tau_1)^{n(\sigma-\delta) }.
$$
Returning to \eqref{ineq_exp}, taking $\tau_0$ and $\tau_1$ to be sufficiently small, and using a geometric series summation, we finally arrive at
\begin{align*}
- \log \vert \det( 1- \mathcal{L}_{\tau_0, \tau_1, s}^2)\vert &\ll  \sum_{n=1}^\infty \vert \mathrm{tr}(\mathcal{L}_{\tau_0, \tau_1, s}^{2n})\vert \\
&\ll \sum_{n=1}^\infty (C \tau_0 \tau_1)^{2n(\sigma-\delta) } \\
&= \frac{(C \tau_0 \tau_1)^{2(\sigma-\delta)}}{ 1- (C \tau_0 \tau_1)^{2(\sigma-\delta)}},
\end{align*}
completing the proof.
\end{proof}

We are now ready to finish the

\begin{proof}[Proof of Proposition \ref{prop:intHSzeros}]
Consider the entire function
$$
f(s) := \det( 1- \mathcal{L}_{\tau_0, \tau_1, s}^2)
$$
and observe that
$$
f(s) = \det( 1 + \mathcal{L}_{\tau_0, \tau_1, s}) \det( 1- \mathcal{L}_{\tau_0, \tau_1, s}).
$$
By Lemma \ref{lem:holomMultiple}, this function is a holomorphic multiple of $Z_\Gamma(s)$. It follows that resonances for $X$ (counted according to multiplicity) occur as zeros of $f(s)$. Combining Lemma \ref{lem:jensenAdapt} and Lemma \ref{lem:pointwiseEst}, we see that there are positive constants $\alpha,\beta,\epsilon_0,K_0,T_0,C$, depending only on $\Gamma$, such that for all $T > T_0$, $1\le H \le T$, $K > K_0$, and $h,\tau_0,\tau_1 \in (0,\epsilon_0)$ we have
\begin{equation}\label{ctnzerobeforHS}
N_X(\sigma, T) \le C K^2 \max_{\substack{ \sigma-\frac{\alpha}{K} \le  \mathrm{Re}(s)\le  \beta K \\ \vert \mathrm{Im}(s)\vert \le  \beta K }} \left( \int_{T-H}^{T+H} \log \vert  \det( 1- \mathcal{L}_{\tau_0, \tau_1, s+it}^2) \vert dt \right) +  K^2 T (C\tau_0 \tau_1)^{2K}.
\end{equation}
It remains to prove
$$
\log\vert \det( 1- \mathcal{L}_{\tau_0, \tau_1, s}^2) \vert \le  \Vert \mathcal{L}_{\tau_0, \tau_1, s} \Vert_{\mathrm{HS},h}^2.
$$
To that effect, we recall some basic facts on trace class operators and Fredholm determinants, referring the reader to \cite{Gohberg_Goldberg_Krupnik, Gohberg_Krein, Simon} for more details. Weyl's estimate on Fredholm determinants states that for every trace class operator $A\colon H \to H$ on a separable Hilbert space $H$, we have
\begin{equation}\label{WeylEstgeneral}
\log\vert \det\left( 1- A \right) \vert \le  \Vert A\Vert_{\mathrm{tr}},
\end{equation}
where $\Vert \cdot\Vert_{\mathrm{tr}}$ is the trace norm. Moreover, for any two Hilbert--Schmidt operators $A_1,A_2 \colon H \to H$ we have the Cauchy--Schwarz type inequality 
\begin{equation}\label{traceHS}
\Vert A_1 A_2\Vert_{\mathrm{tr}} \le  \Vert A_1 \Vert_{\mathrm{HS}} \Vert A_2\Vert_{\mathrm{HS}}. 
\end{equation}
By Lemma \ref{lem:welldefinedness}, we know that if $h,\tau_0,\tau_1$ are sufficiently small,
$$
\mathcal{L}_{\tau_0,\tau_1,s} \colon H^2(\Omega(h)) \to H^2(\Omega(h)) 
$$
is a well-defined defined family of trace class operators. We can therefore apply the above facts to $A=A_1=A_2 = \mathcal{L}_{\tau_0, \tau_1, s}$ and $H=H^2(\Omega(h))$ to obtain
$$
\log\vert \det( 1- \mathcal{L}_{\tau_0, \tau_1, s}^2) \vert \le   \Vert \mathcal{L}_{\tau_0, \tau_1, s}^2 \Vert_{\mathrm{tr}} \le  \Vert \mathcal{L}_{\tau_0, \tau_1, s} \Vert_{\mathrm{HS},h}^2,
$$
as desired.
\end{proof}

\subsection{Hilbert--Schmidt norm}\label{sec:HSnorm}
The goal of this section is to prove the following:

\begin{prop}[HS-norm of $\mathcal{L}_{\tau_0, \tau_1, s}$]\label{prop:HSnorm}
Let $h,\tau_0, \tau_1 > 0$ be sufficiently small in terms of $\Gamma$. The Hilbert--Schmidt norm of the operator
\[
\mathcal{L}_{\tau_0, \tau_1, s} = \mathcal{L}_{\tau_0, s} \mathcal{L}_{\tau_1, s}\colon H^2(\Omega(h)) \to H^2(\Omega(h))
\]
is given by
\[
\Vert \mathcal{L}_{\tau_0, \tau_1, s} \Vert_{\mathrm{HS},h}^2 = \sum_{b\in \mathcal{A}}  
\sum_{\substack{  
\Aa = \Aa_0\Aa_1 \in Y(\tau_0) \times Y(\tau_1)\\  
\Bb = \Bb_0\Bb_1 \in Y(\tau_0) \times Y(\tau_1) \\  
\Aa_0 \to \Aa_1 \to b,\ \Bb_0 \to \Bb_1 \to b } }  
\int_{\Omega_b(h)} \gamma'_{\Aa}(z)^s \overline{\gamma'_{\Bb}(z)^s}  
B_{\Omega(h)}(\gamma_{\Aa}(z), \gamma_{\Bb}(z)) \dvol(z).
\]
Here, given sets $S_0, S_1 \in \mathcal{W},$ we write $\Aa = \Aa_0\Aa_1 \in S_0 \times S_1$ to mean $\Aa_i \in S_i$ for $i \in \{0,1\}$, and similarly for $\Bb$. The function $B_{\Omega(h)}(\cdot,\cdot)$ denotes the Bergman reproducing kernel of $H^2(\Omega(h))$.
\end{prop}

\begin{proof}
For analogous formulas, see \cite[Lemma 4.7]{NaudMagee} and \cite[Prop.~5.5]{Pohl_Soares}. We give an alternative but equivalent proof. First, observe that
\[
\mathcal{L}_{\tau_0, \tau_1, s}f(z) = \sum_{\substack{  
\Aa = \Aa_0\Aa_1 \in Y(\tau_0) \times Y(\tau_1)\\  
\Aa_0 \to \Aa_1 \to b }} \gamma'_{\Aa}(z)^s f(\gamma_{\Aa}(z)) \quad \text{for } z\in \Omega_b(h).
\]
Define
\[
S_b := \left\{ \Aa = \Aa_0\Aa_1 \in Y(\tau_0)\times Y(\tau_1) : \Aa_0 \to \Aa_1 \to b \right\},
\]
so that
\[
\mathcal{L}_{\tau_0, \tau_1, s}f(z) = \sum_{\Aa\in S_b} \gamma'_{\Aa}(z)^s f(\gamma_{\Aa}(z)) \quad \text{for } z\in \Omega_b(h).
\]
By the Bergman kernel’s defining property (see Section \ref{sec:Bergman}), we have
\[
\int_{\Omega(h)} B_{\Omega(h)}(z,w)f(w)\dvol(w) = f(z).
\]
Thus, we can rewrite $\mathcal{L}_{\tau_0, \tau_1, s}$ as an integral kernel operator:
\[
\mathcal{L}_{\tau_0, \tau_1, s}f(z) = \int_{\Omega(h)} K(z,w)f(w)\dvol(w),
\]
with kernel
\[
K(z,w) = \sum_{\Aa\in S_b} \gamma'_{\Aa}(z)^s B_{\Omega(h)}(\gamma_{\Aa}(z), w) \quad \text{for } z \in \Omega_b(h).
\]
The Hilbert--Schmidt norm can then be computed as follows:
\begin{align*}
\Vert \mathcal{L}_{\tau_0, \tau_1, s} \Vert_{\mathrm{HS}}^2 
&= \int_{\Omega(h)} \int_{\Omega(h)} |K(z,w)|^2 \dvol(w)\dvol(z) \\
&= \sum_{b\in \mathcal{A}} \int_{\Omega_b(h)} \int_{\Omega(h)} |K(z,w)|^2 \dvol(w)\dvol(z) \\
&= \sum_{b\in \mathcal{A}} \int_{\Omega_b(h)} \int_{\Omega(h)} \sum_{\Aa,\Bb \in S_b}
\gamma'_{\Aa}(z)^s \overline{\gamma'_{\Bb}(z)^s} B_{\Omega(h)}(\gamma_{\Aa}(z), w) \overline{B_{\Omega(h)}(\gamma_{\Bb}(z), w)} \dvol(w)\dvol(z).
\end{align*}
Swapping the integral and sum gives
\begin{align*}
\Vert \mathcal{L}_{\tau_0, \tau_1, s} \Vert_{\mathrm{HS}}^2
&= \sum_{b\in \mathcal{A}} \int_{\Omega_b(h)} \sum_{\Aa,\Bb \in S_b} \gamma'_{\Aa}(z)^s \overline{\gamma'_{\Bb}(z)^s}
\left( \int_{\Omega(h)} B_{\Omega(h)}(\gamma_{\Aa}(z), w) \overline{B_{\Omega(h)}(\gamma_{\Bb}(z), w)} \dvol(w) \right) \dvol(z).
\end{align*}
Using the reproducing property, we obtain
\begin{align*}
\int_{\Omega(h)} B_{\Omega(h)}(\gamma_{\Aa}(z), w) \overline{B_{\Omega(h)}(\gamma_{\Bb}(z), w)} \dvol(w)
&= \int_{\Omega(h)} B_{\Omega(h)}(\gamma_{\Aa}(z), w) B_{\Omega(h)}(w, \gamma_{\Bb}(z)) \dvol(w)\\
&= B_{\Omega(h)}(\gamma_{\Aa}(z), \gamma_{\Bb}(z)),
\end{align*}
and the result follows.
\end{proof}

\subsection{A bound for special sums over words}\label{sec:certainSum}
In the final steps of the proof of Theorem \ref{main_thm}, we encounter the sum
$$
S(\alpha, \tau) := \sum_{\Aa,\Bb \in Y(\tau)} \Upsilon_{\mathsf{red}(\Aa\overline{\Bb})}^{\alpha}.
$$
The goal of this subsection is to prove
\begin{lemma}[Bound for $S(\alpha, \tau)$]\label{lem:weird_sum}
For all $\tau > 0$ and all $\alpha \in (0,\delta)$, we have
\begin{equation}\label{eq:weird_sum}
S(\alpha, \tau) \ll_{\epsilon, \Gamma} \tau^{-\epsilon} \left(\tau^{-\delta} +  \tau^{- 2\delta + 2\alpha} \right)
\end{equation}
for any $\epsilon > 0$.
\end{lemma}

\begin{remark}\label{rmk:trivial}
By Lemma~\ref{lem:YZ}, we have
\[
S(\alpha, \tau) \ge \sum_{\Aa \in Y(\tau)} 1 = |Y(\tau)| \gg \tau^{-\delta}.
\]
This lower bound shows that the estimate in Lemma~\ref{lem:weird_sum} is sharp up to an $\epsilon$-loss, at least when $\alpha \ge \delta/2$.
\end{remark}

\begin{proof}
We first prove that for all $\beta > 0$, we have
\begin{equation}\label{eq:beta_bound}
\sum_{\substack{\Aa \in \mathcal{W}\\ \Upsilon_{\Aa} > \tau}} \Upsilon_{\Aa}^\beta \ll_{\epsilon,\beta} \tau^{-\epsilon}( 1 + \tau^{-\delta+\beta} ).
\end{equation}
Note that $\Upsilon_{\Aa}$ is uniformly bounded from above in terms of $\Gamma$, so there exists some constant $C = C(\Gamma) > 0$ such that
\begin{align*}
\sum_{\substack{\Aa \in \mathcal{W}\\ \Upsilon_{\Aa} > \tau}} \Upsilon_{\Aa}^\beta 
&\le \sum_{-C \le k \le \log_2(\tau^{-1})} \left( \sum_{2^{-k} \ge \Upsilon_{\Aa} > 2^{-(k+1)}} \Upsilon_{\Aa}^\beta\right)\\ 
&\ll \sum_{-C \le k \le \log_2(\tau^{-1})} 2^{-\beta k} \#\left\{ \Aa \in \mathcal{W} : \Upsilon_{\Aa} > 2^{-(k+1)} \right\}.
\end{align*}
Using Lemma~\ref{lem:cardsets}, we get
$$
\sum_{\substack{\Aa \in \mathcal{W}\\ \Upsilon_{\Aa} > \tau}} \Upsilon_{\Aa}^\beta 
\ll_{\epsilon,\beta} \sum_{-C \le k \le \log_2(\tau^{-1})} 2^{(\delta-\beta) k}.
$$
Summing the resulting geometric series (handling separately the cases $\beta > \delta$ and $\beta \le \delta$) yields \eqref{eq:beta_bound}.

We now prove \eqref{eq:weird_sum}. For all $\Aa, \Bb \in Y(\tau)$, there exist $\Aa_0, \Bb_0 , \Cc \in \mathcal{W}$ such that
\begin{itemize}
  \item $ \Aa = \Aa_0 \Cc,\, \Bb = \Bb_0 \Cc $
  \item $\Aa_0, \Bb_0 \to \Cc$, and
  \item $\Aa_0 \to \overline{\Bb_0}$.
\end{itemize}
From Lemma~\ref{lem:boundsderivatives}, we deduce that
\[
\Upsilon_{\mathsf{red}(\Aa\overline{\Bb})} = \Upsilon_{\Aa_0\overline{\Bb_0}} \asymp \Upsilon_{\Aa_0} \Upsilon_{\Bb_0},
\]
and that there exists a constant $C > 0$, depending only on $\Gamma$, such that
\[
C^{-1} \Upsilon_{\Cc}^{-1} \tau < \Upsilon_{\Aa_0}, \Upsilon_{\Bb_0} < C \Upsilon_{\Cc}^{-1} \tau, \qquad \Upsilon_{\Cc} > C^{-1} \tau.
\]
It follows that
\begin{align*}
S(\alpha, \tau) &= \sum_{\Aa,\Bb \in Y(\tau)} \Upsilon_{\mathsf{red}(\Aa\overline{\Bb})}^{\alpha}\\ 
&\ll \sum_{\substack{\Aa_0,\Bb_0,\Cc \in \mathcal{W} \\ \Upsilon_{\Aa_0}, \Upsilon_{\Bb_0} > C^{-1} \Upsilon_{\Cc}^{-1} \tau \\ \Upsilon_{\Cc} > C^{-1} \tau}} \Upsilon_{\Aa_0}^{\alpha} \Upsilon_{\Bb_0}^{\alpha} \\
&\ll \tau^{2\alpha} \cdot \sum_{\substack{\Aa_0,\Bb_0,\Cc \in \mathcal{W} \\ \Upsilon_{\Aa_0}, \Upsilon_{\Bb_0} > C^{-1} \Upsilon_{\Cc}^{-1} \tau \\ \Upsilon_{\Cc} > C^{-1} \tau}} \Upsilon_{\Cc}^{-2\alpha} \\
&\ll \tau^{2\alpha} \cdot \sum_{\substack{\Cc\in \mathcal{W} \\ \Upsilon_{\Cc} > C^{-1} \tau}} \#\left\{ (\Aa_0,\Bb_0) \in \mathcal{W}^2 : \Upsilon_{\Aa_0}, \Upsilon_{\Bb_0} > C^{-1} \Upsilon_{\Cc}^{-1} \tau \right\} \cdot \Upsilon_{\Cc}^{-2\alpha}.
\end{align*}
Applying Lemma~\ref{lem:cardsets} in the last line gives
$$
S(\alpha, \tau) \ll \tau^{-2\delta + 2\alpha} \left(  \sum_{\substack{\Cc \in \mathcal{W}\\ \Upsilon_{\Cc} > C^{-1} \tau}} \Upsilon_{\Cc}^{2\delta - 2\alpha}\right).
$$
Using the bound in \eqref{eq:beta_bound} with $\beta = 2\delta - 2\alpha$ on the right hand side now yields the desired estimate.
\end{proof}

\subsection{Finishing the proof}\label{sec:finish}
We now finish the proof of our main Theorem \ref{main_thm}. It will quickly follow from

\begin{prop}[Main Technical Estimate]\label{prop:main_estimate}
There exists $T_0 = T_0(\Gamma) > 0$ such that for all $T \ge T_0$, $\eta > 0$, $T^{\eta} \le H \le T$, $\sigma \ge 0$, $\epsilon > 0$, and $s \in \CC$ with $\mathrm{Re}(s) \ge \sigma$, $|\mathrm{Im}(s)| \le T$, the following holds: there are parameters $h, \tau_0, \tau_1$ such that
\[
\frac{1}{2H} \int_{T-H}^{T+H} \Vert \mathcal{L}_{\tau_0, \tau_1, s+it} \Vert_{\mathrm{HS}, h}^2 \, dt 
\ll_{\epsilon,\eta ,\Gamma} C^{\sigma} H^{-(2\sigma-\delta)+\epsilon} T^{2\delta - 2\sigma}.
\]
Moreover, this bound is achieved by choosing $h = T^{-1}$ and resolution parameters $\tau_0 = c_0 T^{-1}$ and $\tau_1 = c_1 H^{-1}$ with constants $c_0, c_1 > 0$ depending only on $\Gamma$.
\end{prop}

Before proving this proposition, let us use it to finish the proof of Theorem \ref{main_thm}. From Proposition \ref{prop:intHSzeros}, for sufficiently large $K$, we have
$$
N_X(\sigma, T, H) \le C K^2 \max_{\substack{ \sigma-\frac{\alpha}{K} \le  \mathrm{Re}(s)\le  \beta K \\ \vert \mathrm{Im}(s)\vert \le  \beta K }} \left( \int_{T-H}^{T+H} \Vert \mathcal{L}_{\tau_0, \tau_1, s+it} \Vert_{\mathrm{HS},h}^2 dt \right) +   K^2 T (C\tau_0 \tau_1)^{2K},
$$
where $C>0$ is some constant depending only on $\Gamma.$ Apply Proposition \ref{prop:main_estimate} with the choices in that statement:
\[
h = T^{-1}, \quad \tau_0 = c_0 T^{-1}, \quad \tau_1 = c_1 H^{-1}.
\]
We then get for any $\eta > 0$ and $T^{\eta} \le H \le T$,
\begin{equation}\label{eq:almostfinalbound}
N_X(\sigma, T, H) \ll_{\epsilon,\eta,\Gamma}  K^2 C^{\sigma} H^{1-(2\sigma-\delta)+ O(\frac{1}{K}) + \epsilon} T^{2\delta - 2\sigma} +  K^2 C^K T^{1-2K}.
\end{equation}
Now choose $K= \log(T)$ and observe that $H^{O(\frac{1}{K})} = O(1)$ and $K^2 = O_\epsilon (T^\epsilon) $ for any $\epsilon>0$, and that the second term on the right of \eqref{eq:almostfinalbound} gets absorbed by the first one. This yields
$$
N_X(\sigma, T, H) \ll_{\epsilon,\eta,\Gamma} H^{1-(2\sigma-\delta)+\epsilon} T^{2\delta - 2\sigma + \epsilon} 
$$
In particular, we obtain
$$
N_X(\sigma,T) = N_X\left(\sigma,\frac{T}{2}, \frac{T}{2}\right) \ll_{\epsilon,\Gamma} T^{1+\delta - 2(2\sigma - \delta)+\epsilon}.
$$
This concludes the proof of Theorem \ref{main_thm}, conditional on Proposition \ref{prop:main_estimate}. The proof of the latter occupies the rest of this paper.

\begin{proof}[Proof of Proposition \ref{prop:main_estimate}]
Let $h, \tau_0, \tau_1 > 0$ be sufficiently small such that the operator
\begin{equation}\label{opfinal}
\mathcal{L}_{\tau_0, \tau_1, s} \colon H^2(\Omega(h)) \to H^2(\Omega(h))
\end{equation}
is well-defined. The parameters $\tau_0$ and $\tau_1$ will be chosen in terms of $T$ and $H$ during the proof.

By Proposition \ref{prop:HSnorm}, we have
$$
\Vert \mathcal{L}_{\tau_0, \tau_1, s} \Vert_{\mathrm{HS},h}^2 = \sum_{b \in \mathcal{A}} 
\sum_{\substack{
\Aa = \Aa_0 \Aa_1 \in Y(\tau_0)\times Y(\tau_1)\\
\Bb = \Bb_0 \Bb_1 \in Y(\tau_0)\times Y(\tau_1)\\
\Aa_0 \to \Aa_1 \to b,\; \Bb_0 \to \Bb_1 \to b
}}
\int_{\Omega_b(h)} g_{\Aa,\Bb}(z;s) \,\mathrm{dvol}(z),
$$
where 
$$
g_{\Aa,\Bb}(z;s) := \gamma'_\Aa(z)^s \overline{\gamma'_\Bb(z)^s} B_{\Omega(h)}(\gamma_\Aa(z), \gamma_\Bb(z)).
$$
Note that for all $t\in \mathbb{R}$ we have
$$
g_{\Aa, \Bb}(z;s+it) = g_{\Aa, \Bb}(z;s) e^{-it  \Phi_{\Aa,\Bb}(z)},
$$
where $ \Phi_{\Aa,\Bb}$ is the phase defined in \eqref{phase}, whence
$$
\Vert \mathcal{L}_{\tau_0, \tau_1, s+it} \Vert_{\mathrm{HS},h}^2 = \sum_{b \in \mathcal{A}} 
\sum_{\substack{
\Aa = \Aa_0 \Aa_1 \in Y(\tau_0)\times Y(\tau_1)\\
\Bb = \Bb_0 \Bb_1 \in Y(\tau_0)\times Y(\tau_1)\\
\Aa_0 \to \Aa_1 \to b,\; \Bb_0 \to \Bb_1 \to b
}}
\int_{\Omega_b(h)} g_{\Aa,\Bb}(z;s) e^{-it  \Phi_{\Aa,\Bb}(z)} \,\mathrm{dvol}(z),
$$
Let $\varphi \in C^\infty(\mathbb{R})$ be a non-negative bump function supported on $[-2,2]$ with $\varphi \equiv 1$ on $[-1,1]$, and define
\[
\varphi_{T,H}(t) := \frac{1}{H} \, \varphi\left( \frac{t - T}{H} \right).
\]
Then $\varphi_{T,H}$ is supported on $[T - 2H, T + 2H]$ and equals $1/H$ on $[T - H, T + H]$, so that
\[
\frac{1}{H} \int_{T - H}^{T + H} \left\| \mathcal{L}_{\tau_0, \tau_1, s + it} \right\|_{\mathrm{HS}, h}^2 \, dt
\le \int_{-\infty}^{\infty} \varphi_{T,H}(t) \left\| \mathcal{L}_{\tau_0, \tau_1, s + it} \right\|_{\mathrm{HS}, h}^2 \, dt
=:\Sigma.
\]
Expanding the HS-norm using the previous formula, we find
\begin{equation}\label{eq:sigma-expansion}
\int_{-\infty}^{\infty} \varphi_{T,H}(t) \left\| \mathcal{L}_{\tau_0, \tau_1, s + it} \right\|_{\mathrm{HS}, h}^2 \, dt= \sum_{b \in \mathcal{A}} 
\sum_{\substack{
\Aa = \Aa_0 \Aa_1 \in Y(\tau_0)\times Y(\tau_1) \\
\Bb = \Bb_0 \Bb_1 \in Y(\tau_0)\times Y(\tau_1) \\
\Aa_0 \to \Aa_1 \to b, \;\Bb_0 \to \Bb_1 \to b
}}
\int_{\Omega_b(h)} g_{\Aa,\Bb}(z;s) \, \widehat{\varphi_{T,H}}(\Phi_{\Aa,\Bb}(z)) \, \mathrm{dvol}(z) := \Sigma.
\end{equation}

Let $\widetilde{C} > 0$ be as in Lemma \ref{lem:separation}. From now on, let
$$
\tau_0 = \widetilde{C} h.
$$
Then $\Aa_0 \neq \Bb_0$ implies that $\gamma_\Aa(z)$ and $\gamma_\Bb(z)$ lie in different components of $\Omega(h)$, so
$$
B_{\Omega(h)}(\gamma_\Aa(z), \gamma_\Bb(z)) = 0,
$$
Thus, only terms with $\Aa_0 = \Bb_0$ contribute, giving
\begin{equation}\label{eq:reexpr}
\Sigma =   \sum_{b \in \mathcal{A}} 
\sum_{\substack{
\Cc \in Y(\tau_0)\\
\Aa_1,\Bb_1 \in Y(\tau_1)\\
\Cc \to \Aa_1,\Bb_1 \to b
}}
\int_{\Omega_b(h)} g_{\Aa,\Bb}(z;s) \widehat{\varphi_{T,H}}(\Phi_{\Aa,\Bb}(z)) \,\mathrm{dvol}(z).
\end{equation}
For each $b\in \mathcal{A}$, define $Q_b$ as the set corresponding to the inner sum, and decompose it into diagonal and off-diagonal parts:
\begin{align*}
Q_b^{(1)} &:= \{ (\Cc\Aa_1,\Cc\Aa_1) : \Cc \in Y(\tau_0),\, 
\Aa_1\in Y(\tau_1),\, \Cc \to \Aa_1 \to b   \}, \\
Q_b^{(2)} &:= \{ (\Cc\Aa_1,\Cc\Bb_1) : \Cc \in Y(\tau_0),\, 
\Aa_1, \Bb_1\in Y(\tau_1), \Cc \to \Aa_1, \Bb_1 \to b ,\, \Aa_1 \neq \Bb_1 \}.
\end{align*}
Decompose $\Sigma$ accordingly, writing
$$
\Sigma = \Sigma^{(1)} + \Sigma^{(2)},
$$
where
$$
\Sigma^{(\ell)} := \sum_{b \in \mathcal{A}} 
\sum_{(\Aa,\Bb) \in Q_b^{(\ell)}}
\int_{\Omega_b(h)} g_{\Aa,\Bb}(z;s) \widehat{\varphi_{T,H}}(\Phi_{\Aa,\Bb}(z)) \,\mathrm{dvol}(z), \quad \ell\in \{1,2\}.
$$

Throughout, let $T\gg 1$ and assume $h=T^{-1}$, $\eta > 0$, $T^{\eta}\le H \le  T$, $\mathrm{Re}(s)\ge \sigma>0$, and $\vert \mathrm{Im}(s) \vert\le  K$. Moreover, $C>0$ is a constant depending only on $\Gamma$ whose precise value changes from place to place. All implied constants are allowed to depend on $\Gamma.$

Lemma \ref{lem:welldefinedness} ensures that if $\tau_0 > 0$ and $\tau_1 > 0$ are sufficiently small, then for all $(\Aa,\Bb)\in Q_b$ and $z\in \Omega_b(h)$ we have
$$
\mathrm{dist}( \gamma_{\Aa}(z), \partial \Omega(h) ) \ge h/2, \quad \mathrm{dist}( \gamma_{\Bb}(z), \partial \Omega(h) )\ge h/2.
$$
Combining this with Lemma \ref{lem:UpperBoundBergman}, we get
\begin{equation}\label{est:BKpointwise}
B_{\Omega(h)}(\gamma_{\Aa}(z), \gamma_{\Bb}(z) ) \le  \frac{1}{\pi\, \mathrm{dist}( \gamma_{\Aa}(z), \partial \Omega(h) )\, \mathrm{dist}( \gamma_{\Bb}(z), \partial \Omega(h) ) } \ll h^{-2}.
\end{equation}
Lemmas \ref{lem:boundsderivatives} and \ref{lem:YZ} imply that for each $b\in \mathcal{A}$ and $(\Aa,\Bb)\in Q_b$, we have 
$$
\Upsilon_{\Aa}  \asymp \tau_0 \tau_1, \quad \Upsilon_{\Bb}  \asymp \tau_0 \tau_1.
$$
Combining this with Lemma~\ref{lem:crucial_estimate}, we have for all $\mathrm{Re}(s) > 0,$
\begin{equation}
\vert \gamma'_{\Aa}(z)^{s+it} \vert \le  (C\Upsilon_{\Aa})^{\mathrm{Re}(s)} e^{C h (K+t)} \le ( C\tau_0 \tau_1 )^{\mathrm{Re}(s)} e^{C h (K+T+H)},
\end{equation}
where we used $h(K+T+H) = O(1) $, which follows from the assumptions above. Assuming $\tau_0, \tau_1$ are small enough so that $C\tau_0 \tau_1 < 1$, we obtain further for all $\mathrm{Re}(s) \ge \sigma > 0,$
\begin{equation}\label{final:bound_gamma_a}
\vert \gamma'_{\Aa}(z)^{s+it} \vert \ll ( C\tau_0 \tau_1 )^{\sigma},
\end{equation}
The same estimate holds for $\Bb$. Combining \eqref{final:bound_gamma_a} and \eqref{est:BKpointwise}, we deduce that for all $(\Aa,\Bb)\in Q_b$ and $z\in \Omega_b(h)$:
\begin{equation}\label{est:pointwisegab}
\vert g_{\Aa, \Bb}(z;s+it) \vert \ll  ( C\tau_0 \tau_1 )^{2\sigma} h^{-2}.
\end{equation}
Meanwhile, by Lemma \ref{lem:YZ} we have
\begin{equation}\label{final:bound_ytau}
\vert Y(\tau)\vert \asymp \tau^{-\delta}.
\end{equation}
Recall from our choices of $\tau_0$ and $h$ that
\begin{equation}\label{final:bound_tau0}
\tau_0 \asymp h = T^{-1}.
\end{equation}
From the diagonal case of Proposition \ref{prop:FOURIER}, we can now estimate:
\begin{align}
\Sigma^{(1)} &\ll \sum_{b\in \mathcal{A}}  
\sum_{\substack{ \Cc \Aa_1 \in Y(\tau_0)\times Y(\tau_1)  \\ 
\Cc \to \Aa_1 \to b
}} h^{-\delta + 2} \left\Vert g_{\Cc \Aa_1, \Cc \Aa_1}(\cdot;s) \right\Vert_{\infty,\Omega_b(h)}   \\
&\ll \vert Y(\tau_0)\vert \vert Y(\tau_1)\vert (C \tau_0\tau_1)^{2\sigma} h^{-\delta} &&\text{by \eqref{est:pointwisegab}}\\
&\ll ( C\tau_0 \tau_1 )^{-\delta + 2\sigma} h^{-\delta}&&\text{by \eqref{final:bound_ytau}} \\
&\ll C^\sigma T^{2\delta-2 \sigma} \tau_1 ^{-\delta + 2\sigma} &&\text{by \eqref{final:bound_tau0}} \label{final1}.
\end{align}
To estimate $\Sigma^{(2)}$, recall that $Q_b^{(2)}$ consists of pairs $(\Aa,\Bb)\in \mathcal{W}\times \mathcal{W}$ with $\Aa=\Cc\Aa_1$ and $\Bb=\Cc\Bb_1$ such that
\begin{itemize}
\item $\Cc\in Y(\tau_0)$ and $\Aa_1, \Bb_1 \in Y(\tau_1)$,
\item $\Cc \to \Aa_1, \Bb_1 \to b$, and
\item $\Aa_1\neq \Bb_1$.
\end{itemize}
Under these conditions, we have $\Aa \overline{\Bb} = \Cc\Aa_1 \overline{\Bb}_1 \overline{\Cc}$ and
$$
\Cc \to \Aa_1 \overline{\Bb}_1 \to \overline{\Cc}.
$$
In particular, $\mathsf{red}(\Aa\overline{\Bb}) = \Cc \, \mathsf{red}( \Aa_1 \overline{\Bb}_1 )\, \overline{\Cc}$ and $\Cc \to \mathsf{red}( \Aa_1 \overline{\Bb}_1 )\to \overline{\Cc}$. Applying Parts \eqref{part:mirrorEst} and \eqref{part:almostMultip} of Lemma \ref{lem:boundsderivatives}, we then find
\begin{itemize}
\item $\Upsilon_{\mathsf{red}(\Aa\overline{\Bb})}  \asymp \Upsilon_{\Cc} \Upsilon_{\mathsf{red}(\Aa_1 \overline{\Bb}_1)} \Upsilon_{\overline{\Cc}} \asymp \Upsilon_\Cc^2 \Upsilon_{\mathsf{red}(\Aa_1 \overline{\Bb}_1)} , $
\item $\Upsilon_{\Aa} \asymp \Upsilon_\Cc  \Upsilon_{\Aa_1},$ and
\item $\Upsilon_{\Bb} \asymp \Upsilon_\Cc  \Upsilon_{\Bb_1}.$
\end{itemize}
This implies that for all $(\Aa,\Bb)\in Q_b^{(2)}$
\begin{equation}\label{final:bound_Dab}
\mathcal{D}_{\Aa,\Bb} = \left(  \frac{\Upsilon_{\Aa} \Upsilon_{\Bb} }{\Upsilon_{\mathsf{red}(\Aa\overline{\Bb})}}\right)^{1/2} \asymp \left(  \frac{  \Upsilon_{\Aa_1} \Upsilon_{\Bb_1} }{  \Upsilon_{\mathsf{red}(\Aa_1 \overline{\Bb}_1)}  }\right)^{1/2} \asymp \tau_1 \Upsilon_{\mathsf{red}(\Aa_1 \overline{\Bb}_1)}^{-1/2}.
\end{equation}
Using Proposition \ref{prop:FOURIER}, we can now estimate the off-diagonal contribution as follows:
\begin{align*}
\Sigma^{(2)} &= \sum_{b\in \mathcal{A}}  
\sum_{ (\Aa,\Bb) \in Q_b^{(2)} } 
\int_{\Omega_b(h)} g_{\Aa,\Bb}(z;s) \widehat{\varphi_{T,H}}(\Phi_{\Aa,\Bb}(z)) \,\mathrm{dvol}(z)\\
&\ll_{\epsilon, \eta} \sum_{b\in \mathcal{A}}  \sum_{ (\Aa,\Bb) \in Q_b^{(2)} }  h^{-\delta + 2}  \mathcal{D}_{\Aa,\Bb}^{-\delta} H^{-\delta+\epsilon} \left\Vert g_{\Aa, \Bb}(\cdot;s) \right\Vert_{\infty,\Omega_b(h)} &&\text{by Prop. \ref{prop:FOURIER}}\\
&\ll \sum_{b\in \mathcal{A}} \sum_{ (\Aa,\Bb) \in Q_b^{(2)} } (C \tau_0 \tau_1)^{2\sigma} h^{-\delta}  \mathcal{D}_{\Aa,\Bb}^{-\delta} H^{-\delta+\epsilon} &&\text{by \eqref{est:pointwisegab}}\\
&\ll \sum_{b\in \mathcal{A}} \sum_{ (\Aa,\Bb) \in Q_b^{(2)} } (C \tau_0 \tau_1)^{2\sigma} \tau_1^{-\delta} h^{-\delta} H^{-\delta+\epsilon} \Upsilon_{\mathsf{red}(\Aa_1 \overline{\Bb}_1)}^{\delta/2} &&\text{by \eqref{final:bound_Dab}}\\
&\ll \vert Y(\tau_0)\vert (C \tau_0 \tau_1)^{2\sigma} \tau_1^{-\delta} h^{-\delta} H^{-\delta+\epsilon} \left(  \sum_{\Aa_1,\Bb_1\in Y(\tau_1)}   \Upsilon_{\mathsf{red}(\Aa_1 \overline{\Bb}_1)}^{\delta/2} \right)\\
&\ll (C \tau_0 \tau_1)^{-\delta + 2\sigma} h^{-\delta} H^{-\delta+\epsilon} \left(  \sum_{\Aa_1,\Bb_1\in Y(\tau_1)}   \Upsilon_{\mathsf{red}(\Aa_1 \overline{\Bb}_1)}^{\delta/2} \right) &&\text{by \eqref{final:bound_ytau}}\\
&\ll C^{\sigma}  T^{2\delta-2\sigma} \tau_1^{-\delta +2\sigma}   H^{-\delta+\epsilon} \left(  \sum_{\Aa_1,\Bb_1\in Y(\tau_1)}   \Upsilon_{\mathsf{red}(\Aa_1 \overline{\Bb}_1)}^{\delta/2} \right) &&\text{by \eqref{final:bound_tau0}}.
\end{align*}
The remaining sum can be estimated using Lemma \ref{lem:weird_sum} (applied with $\alpha=\delta/2$), which gives
\begin{equation}\label{final2}
\Sigma^{(2)} \ll_{\epsilon, \eta} C^{\sigma}  T^{2\delta-2\sigma}  H^{-\delta+\epsilon}  \tau_1^{-2\delta +2\sigma} 
\end{equation}
Combining the bounds \eqref{final1} and \eqref{final2}, we obtain
\begin{align*}
\Sigma &\ll_\epsilon C^\sigma T^{2\delta-2 \sigma} \tau_1 ^{-\delta + 2\sigma} +    C^{\sigma}  T^{2\delta-2\sigma}  H^{-\delta+\epsilon}  \tau_1^{-2\delta +2\sigma}   \\
&= C^\sigma T^{2\delta-2 \sigma} \tau_1^{-\delta + 2\sigma} \left( 1 + H^{-\delta+\epsilon}  \tau_1^{-\delta} \right),
\end{align*}
It remains to choose $\tau_1$ optimally. This may be done by taking 
$$
\tau_1 = c H^{-1}
$$
for some constant $c>0$ sufficiently small to ensure that the parameter $\tau_1$ is admissible in all the above estimates. Inserting this choice into the previous estimate yields
$$
\frac{1}{2H} \int_{T-H}^{T+H} \Vert \mathcal{L}_{\tau_0, \tau_1, s+it} \Vert_{\mathrm{HS},h}^2 dt \ll_{\epsilon, \eta} C^\sigma T^{2\delta-2 \sigma} H^{-(2\sigma-\delta)+\epsilon},
$$
completing the proof.
\end{proof}

\normalem
\bibliography{improved} 
\bibliographystyle{amsplain}

\end{document}